\def\@cite#1#2{{\m@th\upshape\bfseries%
[{#1\if@tempswa{\m@th\upshape\mdseries, #2}\fi}]}}
\theoremstyle{plain}
\newtheorem{thm}{Theorem}[section]
\newtheorem{cor}[thm]{Corollary}
\newtheorem{prop}[thm]{Proposition}
\newtheorem{lem}[thm]{Lemma}
\newtheorem{keylem}[thm]{Key Lemma}
\theoremstyle{definition}
\newtheorem{rem}[thm]{Remark}
\renewcommand{\labelenumi}{(\roman{enumi}) }
\newcommand{\bC}{{\mathbb{C}}}
\newcommand{\bT}{{\mathbb{T}}}
\newcommand{\bZ}{{\mathbb{Z}}}
\newcommand{\B}{{\mathcal{B}}}
\renewcommand{\H}{{\mathcal{H}}}
\newcommand{\M}{{\mathcal{M}}}
\renewcommand{\P}{{\mathcal{P}}}
\newcommand{\R}{{\mathcal{R}}}
\renewcommand{\phi}{\varphi}
\def\si{\sigma}
\def\al{\alpha}
\def\be{\beta}
\def\ga{\gamma}
\newcommand\wpi{\widetilde{\pi}}
\newcommand{\Bv}{{\mathbf{v}}}
\newcommand{\Bw}{{\mathbf{w}}}
\newcommand{\FORAL}{\text{ for all }}
\newcommand{\qand}{\quad\text{and}\quad}
\newcommand{\qfor}{\quad\text{for}\ }
\newcommand{\qforal}{\quad\text{for all}\ }
\newcommand{\ltwo}{\ell^2}
\newcommand{\ol}{\overline}
\newcommand{\ad}{\operatorname{ad}}
\newcommand{\ann}{\operatorname{ann}}
\newcommand{\diag}{\operatorname{diag}}
\newcommand{\End}{\operatorname{End}}
\newcommand{\id}{{\operatorname{id}}}
\newcommand{\scp}[2]{#1 \times_{#2} \bZ_+} 
\begin{document}

\title[Conjugate dynamical systems]{Conjugate dynamical systems\\ on C*-algebras}
\thanks{}

\author[K.R. Davidson]{Kenneth R. Davidson}
\address{Pure Math.\ Dept.\\U. Waterloo\\Waterloo, ON\;
N2L--3G1\\CANADA}
\email{krdavids@uwaterloo.ca}

\author[E.T.A. Kakariadis]{Evgenios T.A. Kakariadis}
\address{Pure Math.\ Dept.\\U. Waterloo\\Waterloo, ON\;
N2L--3G1\\CANADA}
\email{ekakaria@uwaterloo.ca}

\begin{abstract}
Let $(A, \alpha)$ and $(B, \beta)$ be C*-dynamical systems
where $\alpha$ and $\beta$ are arbitrary $*$-endomorphisms.
When $\al$ is injective or surjective,
we show that the semicrossed products $\scp{A}{\alpha}$ and
$\scp{B}{\beta}$ are isometrically isomorphic if and only if
$(A, \alpha)$ and $(B, \beta)$ are outer conjugate.
This conclusion also holds in various other cases as well.
\end{abstract}

\subjclass[2010]{Primary  47L65, 46L40}
\keywords{conjugacy algebra, semicrossed product, dynamical system}
\thanks{First author partially supported by an NSERC grant.}
\thanks{Second author partially supported by the
Fields Institute for Research in the Mathematical Sciences}
\date{}
\maketitle

\section{introduction} \label{S:intro}

If $\alpha$ is a $*$-endomorphism of a C*-algebra $A$, the semicrossed product
$\scp A \alpha$ is an operator algebra which encodes the dynamics in the sense that
it is the universal operator algebra for covariant representations of $(A,\alpha)$.
We wish to determine to what extent the dynamical system can be recovered
from the semicrossed product.
It is easy to see that if two systems are outer conjugate, then the semicrossed products
are completely isometrically isomorphic.
We establish the converse when $\al$ is injective, and in various other cases,
by showing that if two semicrossed products are isometrically isomorphic,
then the dynamical systems are outer conjugate.

The use of nonself-adjoint operator algebras to encode C*-dynamics goes
back to Arveson \cite{Arv,ArvJ} where ergodic actions on a space $X$
were encoded in a concrete operator algebra.
Under certain hypotheses, two such algebras were shown to be
isomorphic if and only if the two (commutative) dynamical systems are conjugate.
Peters \cite{Peters} established that under much more general conditions,
one obtains a universal operator algebra encoding the covariance relations.
He extended the isomorphism results for the commutative case,
although there were still conditions on the fixed point sets.
This was further developed by Hadwin and Hoover \cite{HadH}.
Finally in \cite{DK}, Davidson and Katsoulis showed that (in the commutative case),
algebraic isomorphism of semicrossed products is a complete invariant for
conjugacy. See \cite{DKsurvey} for a survey of these results.

The use of non-selfadjoint operator algebras is not just an artifact of convenience.
They arise naturally when one tries to model the covariance relations.
Moreover, the C*-algebra crossed product loses information.
Even when $\alpha$ is an automorphism, so that the usual C*-algebra crossed product
$A\rtimes_\al \mathbb{Z}$ is available, 
this algebra is not a complete invariant for the system. 
Hoare and Parry \cite{HoaPar66} give an example of an automorphic, commutative 
dynamical system $(A,\al)$ such that $\al$ and $\al^{-1}$ are not conjugate.
However $A\rtimes_\al \mathbb{Z}$ and $A\rtimes_{\al^{-1}} \mathbb{Z}$ are always 
$*$-isomorphic.

In the non-automorphic case, there is a variety of C*-algebra crossed products by
endomorphisms \cite{Pas,Stac,Exel03}, 
introduced as possible generalizations of the crossed product.
As in the automorphic case, these algebras do not generally allow recovery of the
dynamics from the algebra.

Likewise there are several possible choices for the semicrossed product.
See \cite{KakKat11,Kak11} for a discussion.
When the $*$-endomorphism is injective, the various choices produce 
the `same' operator algebra.
The conclusion is basically that the original choice made by Peters is
still the best option as one always obtains an isometric copy of the C*-algebra
in this semicrossed product. 

There is not much literature showing that the semicrossed product is an invariant
for a non-commutative dynamical system, even when $\al$ is an automorphism. 
One has to replace conjugacy by outer conjugacy, because outer conjugate systems 
yield completely isometric semicrossed products.  See section~\ref{S:back}. 
The first result in this direction was given by Muhly and Solel in \cite[Theorem 4.1]{MS00} 
where they show that if $A$ is a separable C*-algebra 
and $\al$ is an automorphism with a full Connes spectrum, 
then the semicrossed product is an isometric isomorphism invariant.
Davidson and Katsoulis \cite{DKsimple} extend this result for separable, 
simple C*-algebras, also when $\al$ is an automorphism, by using 
nest representations and
a result of Kishimoto \cite{Kish81} on universally weakly inner automorphisms.

In this paper, we show that this technology is unnecessary. We are able to show that
isometric isomorphism of semicrossed products of automorphic dynamical systems is a complete invariant for
conjugacy, without making any extra assumption on the C*-algebras.

In fact we do more than that. We are able to prove the same for arbitrary unital C*-algebras provided that we make the common assumption
that the endomorphisms are injective or surjective.
The general case, in which the endomorphisms are not surjective and have kernel, is delicate;
and we have not resolved this case completely.
To our knowledge, these are the first significant results in the literature for this situation.

The main results established here are:

\begin{thm} \label{T:main}
Let $(A,\al)$ and $(B,\be)$ be unital C*-algebra dynamical systems.
Suppose that $\al$ is either injective or surjective.
Then $A\times_\al \bZ_+$ and $B\times_\be \bZ_+$ are isometrically isomorphic
if and only if $(A,\al)$ and $(B,\be)$ are outer conjugate.
\end{thm}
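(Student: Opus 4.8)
The ``if'' direction is already recorded in Section~\ref{S:back}, so the substance is the forward implication: starting from an isometric isomorphism $\Phi:\scp{A}{\al}\to\scp{B}{\be}$, I must manufacture a $*$-isomorphism $\ga:A\to B$ and a unitary $w\in B$ implementing an outer conjugacy. I would organize the argument into three movements: locate the copy of the C*-algebra, transport the covariance generator, and match leading Fourier coefficients. Throughout I write $U$ and $V$ for the universal covariance generators of $\scp{A}{\al}$ and $\scp{B}{\be}$, and I exploit the faithful $\bZ_+$-grading coming from the gauge action, which lets me speak of the Fourier coefficients of an element and guarantees that $\sum_n b_n V^n=0$ forces every $b_n=0$.

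The first movement is to prove $\Phi(A)=B$. Because $\Phi$ is merely isometric and not assumed completely isometric, I cannot invoke functoriality of the C*-envelope to move the diagonal across, nor can I use the adjoint or the gauge action directly, since neither is preserved by a bare isometric isomorphism. Instead I would characterize $A$ intrinsically in a manner that an isometric isomorphism is forced to respect — concretely, through the contractive (finite-dimensional) covariant representations of the system and the analytic disc structure that their scalar parameters carry, recovering $A$ together with the point data rather than by any appeal to the operator-space structure. Once $\Phi(A)=B$ is established, the restriction $\ga:=\Phi|_A:A\to B$ is a unital isometric algebra isomorphism of C*-algebras; such a map sends unitaries to unitaries, hence is $*$-preserving, so $\ga$ is the desired $*$-isomorphism.

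The second and third movements then run as follows. Analyzing the Fourier expansion $\Phi(U)\sim\sum_{n\ge 0} b_n V^n$, the isometry of $\Phi$ and $\Phi^{-1}$ applied to the powers of $U$ should force this expansion to collapse to $\Phi(U)=b_0+wV$ with $w\in B$ a unitary. It is here that the two hypotheses enter: when $\al$ is injective the semicrossed product embeds isometrically in a crossed product by the automorphic dilation, so $U$ dilates to a bi-isometry and the leading-coefficient analysis is clean, whereas when $\al$ is surjective I would instead use surjectivity to pin down the action of $U$ on all of $A$ and run the estimate by hand. Applying $\Phi$ to the covariance relation $Ua=\al(a)U$ gives $\Phi(U)\,\ga(a)=\ga(\al(a))\,\Phi(U)$ for every $a\in A$; substituting $\Phi(U)=b_0+wV$ and using $V\,\ga(a)=\be(\ga(a))\,V$, the degree-one Fourier coefficients yield $w\,\be(\ga(a))=\ga(\al(a))\,w$, that is $\ga\circ\al=\operatorname{Ad}(w^*)\circ\be\circ\ga$, which is precisely outer conjugacy.

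I expect the genuine obstacle to be the first movement: forcing $\Phi(A)=B$ using only isometric data. A completely isometric isomorphism respects the C*-envelope and hence the diagonal automatically, but an isometric one may a priori distort the operator-space structure, so the whole difficulty is to isolate a metric-algebraic description of $A$ that survives under isometric isomorphism. A secondary difficulty is the surjective case, where no automorphic dilation is available and the collapse of the expansion $\Phi(U)=b_0+wV$ with $w$ unitary must be proved directly.
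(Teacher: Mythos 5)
Your central claim fails, and it fails exactly where your proof does its real work. You assert that isometry of $\Phi$ and $\Phi^{-1}$ forces the Fourier expansion of $\Phi(\Bv)$ to collapse to $b_0+\Bw w$ with $w$ unitary, and you then read off outer conjugacy by matching degree-one coefficients against $w$ itself. This collapse is false, even under both hypotheses of the theorem. Take $A=B=\bC$ and $\al=\be=\id$, so that both semicrossed products are the disc algebra with generator $\Bv=z$. Composition with the M\"obius map $\tau(z)=(z-\la)(1-\bar\la z)^{-1}$, $0<|\la|<1$, is a unital completely isometric automorphism $\Phi$, and
\[
\Phi(\Bv)=\tau(z)=-\la+(1-|\la|^2)\sum_{n\ge1}\bar\la^{\,n-1}z^{n},
\]
so $E_0\Phi(\Bv)=-\la\ne0$, every Fourier coefficient is nonzero, and $E_1\Phi(\Bv)=1-|\la|^2$ is invertible but not unitary. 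Hence no estimate can yield your collapse. What is actually provable --- and is the paper's Key Lemma --- is much weaker and suffices: one shows only that $b_1:=E_1\Phi(\Bv)$ is \emph{invertible}; then, since equating Fourier coefficients in $\Phi(a\Bv)=\Phi(\Bv\al(a))$ gives $\be\ga(a)\,b_1=b_1\,\ga\al(a)$ and hence that $b_1^*b_1$ commutes with $\ga\al(A)$, the \emph{polar part} $w$ of $b_1=w|b_1|$ is the unitary implementing outer conjugacy. Your proposal contains no substitute for this mechanism.

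The second omission is the engine that makes $b_1$ invertible, which is where the hypotheses on $\al$ actually enter; your sketch offers only ``run the estimate by hand'' (surjective case) and ``the leading-coefficient analysis is clean'' (injective case). The paper first proves that $b_1$ is \emph{right} invertible using surjectivity of $\Phi$: choose a polynomial $p$ with $\|\Phi(p)-\Bw\|<1/2$ and apply $E_1$ to obtain $\|b_1c-1\|<1/2$ for an explicit $c\in B$. If $\al$ is surjective, pick $b$ with $\ga\al\ga^{-1}(b)=c$; then $\be(b)b_1=b_1\ga\al\ga^{-1}(b)=b_1c=1$, so $b_1$ has a left inverse as well and is invertible. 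If $\al$ is injective, one first deduces that $\be$ is injective (again from right invertibility of $b_1$, via $\ga(\ker\al)=\ker\be$), embeds both systems into their automorphic direct limits, checks through a commuting diagram of the maps $\ad_{\tilde\Bv^*}$ that $\Phi$ induces an isometric isomorphism of the limit semicrossed products, applies the automorphic case there, and finally pulls invertibility of $b_1$ back from $B_\infty$ to $B$ by spectral permanence. By contrast, the step you single out as the principal obstacle --- that $\Phi(A)=B$ and $\ga:=\Phi|_A$ is a $*$-isomorphism --- is the routine part: $A$ is the largest C*-algebra inside $\scp{A}{\al}$, and a unital isometric isomorphism of C*-algebras is a $*$-isomorphism; moreover, for these algebras an isometric isomorphism is automatically completely isometric \cite{Kak11}, so the isometric versus completely isometric distinction you worry about is not an obstruction, and your unexecuted ``analytic disc structure'' detour is unnecessary.
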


We conjecture that no hypotheses on $\al$ are required. We gather considerable
evidence towards this view, and establish the result in a number of other cases.

\begin{thm} \label{T:main2}
Let $(A,\al)$ and $(B,\be)$ be unital C*-algebra dynamical systems.
Suppose that one of the following holds:
\begin{itemize}
\item $A$ has trivial centre, $Z(A) = \bC 1$.  $($e.g. $A$ simple.$)$
\item $A$ is abelian.
\item $A$ is finite $($no proper isometries$)$.
\item $\al(A)'$ is finite.
\item $\al(R_\al) = R_\al$, where $R_\al = \ol{\bigcup_{k\ge1}\ker(\al^k)}$.
\item $\al(\ann(R_\al)) \subseteq \ann(R_\al)$.
\end{itemize}
Then $A\times_\al \bZ_+$ and $B\times_\be \bZ_+$ are isometrically isomorphic
if and only if $(A,\al)$ and $(B,\be)$ are outer conjugate.
\end{thm}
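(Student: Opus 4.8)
The forward implication---that outer conjugate systems have completely isometrically isomorphic semicrossed products---is recorded in Section~\ref{S:back}, so I concentrate throughout on the converse. Thus I am given an isometric isomorphism
\[
\gamma\colon \scp{A}{\al} \longrightarrow \scp{B}{\be},
\]
and I must manufacture a $*$-isomorphism $\pi\colon A\to B$ together with a unitary implementing the outer conjugacy of $(A,\al)$ and $(B,\be)$. Write $\mathfrak{t}_\al,\mathfrak{t}_\be$ for the universal generators, so that $\mathfrak{t}_\al\, a = \al(a)\,\mathfrak{t}_\al$ in $\scp{A}{\al}$ and similarly for $\be$.

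The common engine, independent of the six hypotheses, is the following. Using the gauge action of $\bT$ (rotating $\mathfrak{t}_\al$, fixing $A$) and the resulting Fourier-coefficient decomposition, I would first show that $\gamma$ carries the canonical copy of $A$ onto that of $B$; the restriction $\pi = \gamma|_A$ is then a unital isometric algebra isomorphism, hence a $*$-isomorphism $A\to B$. Comparing the zeroth and first Fourier coefficients of $\gamma(\mathfrak{t}_\al)$ forces
\[
\gamma(\mathfrak{t}_\al) = w\,\mathfrak{t}_\be \qquad\text{for some } w\in B,
\]
and pushing the covariance relation through $\gamma$ and cancelling $\mathfrak{t}_\be$ yields
\[
w\,\be(\pi(a)) = \pi(\al(a))\,w \qquad\text{for all } a\in A.
\]
I expect this reduction to be exactly the content of the Key Lemma. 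The whole theorem then collapses to a single assertion: that $w$ is a unitary. Granting this, the displayed identity rearranges to $\be\circ\pi = \operatorname{Ad}(w^*)\circ\pi\circ\al$, which is precisely the statement that the two systems are outer conjugate with conjugating unitary $w^*$.

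It remains to produce unitarity of $w$, and here the six hypotheses split into two families. A priori $w$ is only an isometry, and the cases $A$ finite and $\al(A)'$ finite dispatch it at once, since finiteness of the ambient algebra leaves no room for a proper isometry. The case $Z(A)=\bC 1$ (in particular $A$ simple) rigidifies the implementing element through the scarcity of central elements and again upgrades it to a unitary, while $A$ abelian is the Gelfand-theoretic counterpart: $\pi$ dualizes to a homeomorphism of maximal ideal spaces and $w$ to a unimodular function. For the last two hypotheses I would instead decompose the system. Since $\al(\ker\al^{k})\subseteq\ker\al^{k-1}$ one always has $\al(R_\al)\subseteq R_\al$; the assumption $\al(R_\al)=R_\al$, respectively $\al(\ann(R_\al))\subseteq\ann(R_\al)$, renders both $R_\al$ and $\ann(R_\al)$ invariant. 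As $\ker\al\subseteq R_\al$ and $R_\al\cap\ann(R_\al)=0$, the endomorphism $\al$ is injective on $\ann(R_\al)$, where Theorem~\ref{T:main} applies verbatim, and locally nilpotent on $R_\al$, which is handled by direct computation; reassembling the two pieces gives outer conjugacy of the whole system.

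The difficulty is concentrated entirely in establishing that $w$ is unitary, and the reason no uniform argument covers all cases is that in the genuinely non-injective, non-surjective situation $w$ can be a proper isometry with $w w^*\neq 1$, reflecting the information lost when $\al$ carries both a kernel and a cokernel. Each hypothesis is tailored to exclude this: the finiteness and centre conditions forbid proper isometries directly, while the conditions on $R_\al$ and $\ann(R_\al)$ restore enough invariance to reduce to the injective case already settled. I therefore expect the delicate step to be the decomposition argument for these last two cases, where one must verify that the splitting of $A$ along $R_\al$ and $\ann(R_\al)$ is compatible with $\gamma$ and that the separately obtained implementing elements recombine into a single unitary.
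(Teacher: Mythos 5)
Your proposal founders on its very first reduction, and everything after it inherits the gap. You claim that comparing Fourier coefficients ``forces'' $\gamma(\mathfrak{t}_\al)=w\,\mathfrak{t}_\be$ for some $w\in B$. This is false: in general the image of the generator has a full Fourier series $\phi(\Bv)=b_0+\Bw b_1+\Bw^2 Y$, and nothing kills the zeroth coefficient $b_0$ or the tail $Y$. Already for $A=B=\bC$ and $\al=\be=\id$ the semicrossed product is the disc algebra, and the (completely isometric) automorphism induced by a M\"obius map sends $\Bv$ to $(\Bv-\la 1)(1-\bar\la\Bv)^{-1}$, whose zeroth Fourier coefficient $-\la$ is nonzero. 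Worse, if your reduction were available, the theorem would hold with \emph{no} hypotheses at all: the paper proves (Lemma~\ref{L:a0=0}, Proposition~\ref{P:b0=0}) that $b_0=0$ alone already forces $b_1$ to be unitary and the systems to be outer conjugate, yet the unrestricted statement is explicitly left open as a conjecture. The true content of the six hypotheses is to show that the \emph{first} Fourier coefficient $b_1$ --- which a priori is only a right-invertible contraction (Lemma~\ref{L:rightinv}), not an isometry as you assert --- is in fact invertible; that invertibility, with $b_0$ and $Y$ still possibly nonzero, is exactly what the Key Lemma~\ref{L:key} converts into outer conjugacy via the polar decomposition of $b_1$. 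Your finiteness cases have the right flavor (one-sided invertibility upgrades to invertibility in a finite algebra), but they are applied to the wrong object under a reduction that cannot be made; the paper applies them to $b_1$ directly (Proposition~\ref{P:finite}) and, for the commutant case, to $a_1\ga^{-1}(b_1)\in\al(A)'$ (Lemma~\ref{L:identities}(iii), Proposition~\ref{P:finite_commutant}). Likewise the trivial-centre case is not a ``rigidity'' of $w$: it runs through $b_0b_0^*\in Z(B)\cap\ann(R_\be)$, forcing either $b_0=0$ or $R_\be=0$, the latter reducing to the injective case (Corollary~\ref{C:trivial_centre}).

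The last two cases also cannot be handled by your proposed splitting, because $A$ does not decompose along $R_\al$ and $\ann(R_\al)$: their sum is generally a proper ideal, and indeed $\ann(R_\al)=0$ whenever $R_\al$ is essential, so there are no ``two pieces'' to reassemble. The paper's actual route is representation-theoretic: conditions (5)/(6) imply $\al(a_0)\ker(\al^{k+1})\subseteq\ker(\al^k)$ for all $k$ (Lemma~\ref{L:al(Ral)}), whence the image of $a_1$ is invertible in each subquotient representation $\pi_kq_k$ (Lemma~\ref{L:b1 inv mod k}); separately, the quotient system $(A/R_\al,\dot\al)$ is injective, so Theorem~\ref{T:inj} and Proposition~\ref{P:quotientRal} make $q_\infty(b_1)$ invertible, which descends to invertibility modulo $\ker(\be^{n_0})$ for some finite $n_0$ (Corollary~\ref{C:quotient al^n0}). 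Summing these representations gives a faithful representation in which $a_1$ is invertible, hence $a_1$ is invertible and the Key Lemma applies. So the one structural idea you would need --- and which is missing --- is a mechanism for proving invertibility of the first Fourier coefficient in the presence of a nonzero $b_0$ and tail, not a device for assuming they vanish.
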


\section{Background} \label{S:back}

A C*-dynamical system will be a unital C*-algebra $A$ together with a unital
$*$-endomorphism $\alpha \in \End(A)$.
A covariant representation of $(A,\alpha)$ is a pair $(\pi,V)$ where
$\pi$ is a $*$-representation of $A$ on a Hilbert space $\H$ and
$V\in \B(\H)$ is a contraction satisfying the covariance relation
\[ \pi(a) V = V \pi\al(a) \qforal a \in A .\]

Let $\P$ be the space of formal polynomials of the form
$\sum_{k=0}^n \Bv^k a_k$ where $a_k \in A$ and $n\ge0$
with the natural vector space structure, and a multiplication given by
the rule
\[ a\Bv = \Bv \alpha(a) \qfor a \in A .\]
Then it is evident that a covariant pair $(\pi,V)$ yields a representation of $\P$ by
\[ \pi\times V\big (\sum_{k=0}^n \Bv^k a_k \big) = \sum_{k=0}^nV^k \pi(a_k) .\]
A norm is defined by
\[ \|p\| = \sup_{(\pi,V) \ \text{covariant}} \|  \pi\times V(p) \| .\]
This quantity is finite because
\[ \|p\| \le \|p\|_1 := \sum_{k=0}^n \|a_k\| .\]
This likewise determines norms on $\M_n(\P)$ for  each $n \ge1$.

The \textit{semicrossed product} $\scp{A}{\alpha}$ is the universal operator algebra
for this family of representations.
That is, it is the operator space completion of $\P$ in this family of matrix norms.
The characterizing property is that every covariant pair $(\pi,V)$ yields a
completely contractive representation $\pi\times V$ of $\scp{A}{\alpha}$
extending the representation on $\P$.

Covariant representations always exist.
Indeed, let $\pi$ be any representation of $A$ on $\H$.
Let $\H^{(\infty)} = \H \otimes \ltwo$ be the infinite ampliation of $\H$.
Define a representation $\wpi$ on $\H^{(\infty)}$ by
\[ \wpi(a) =  \diag ( \pi\al^k(a): k\ge0 ) . \]
Let $S$ be the unilateral shift on $\ltwo$ and set $V = I \otimes S$.
Then it is straightforward to check that $(\wpi,V)$ is a covariant representation,
known as the \textit{orbit representation induced by $\pi$}.
Peters \cite{Peters} shows that if $\pi$ is a faithful representation of $A$,
then $\wpi \times V$ is an isometric representation
of $\scp{A}{\alpha}$.
A different proof is contained in \cite{Kak11} based on a gauge
invariance uniqueness theorem, that additionally provides that $\wpi\times V$ is completely isometric.

One could also define covariant representations that insist that $V$ be an isometry.
By a dilation result of Muhly and Solel \cite{MS06}, every contractive covariant pair
dilates to an isometric covariant pair. Hence there is no difference in the universal
operator algebra obtained.
Moreover, the element $\Bv$ in $\scp A \alpha$ is an isometry in its C*-envelope.

If $(\pi,V)$ is a covariant representation and $z \in \bT = \{z : |z|=1\}$,
then $(\pi,zV)$ is also a covariant representation.
It follows from the universal property that there is a completely isometric
automorphism $\gamma_z$ of $\scp A \alpha$ such that $\gamma_z(a)=a$ for
$a \in A$ and $\gamma_z(\Bv)=z\Bv$.
The map $z \to \gamma_z$ is a group homomorphism which is point-norm
continuous.
This enables us to define Fourier coefficients by
\[ \Bv^n E_n(X) =  \int_{\bT} \gamma_z(X) \bar z^n dm \qfor n \ge 0, \]
where $m$ is normalized Lebesgue measure on $\bT$.
These are evidently completely contractive maps of $\scp A \alpha$ into $A$.
In particular, $E_0$ is an expectation on $\scp A \alpha$ onto $A$
(i.e. a completely contractive idempotent map) which is also a homomorphism.
The standard proof of F\'ejer's Theorem shows that
\[ \Sigma_n(X) = \sum_{k=0}^{n-1} \big( 1 - \tfrac k n \big) \Bv^k E_k(X) \]
are completely contractive maps, and that $\Sigma_n(X)$ converges to $X$ in norm
for all $X \in \scp A \alpha$.
In particular, the map taking $X$ to its Fourier series is injective.
\medskip

Two dynamical systems $(A,\al)$ and $(B,\be)$ are said to be \emph{outer conjugate}
if there is a $*$-isomorphism $\ga$ of $A$ onto $B$ and a unitary $v\in A$
such that
\[
 \al(a) = \ad_v \ga^{-1}\be\ga(a) = v(\ga^{-1}\be\ga(a))v^* \qforal a\in A .
\]
Setting $w = \gamma(v)^*$, it is easy to deduce that
\[
 \be(b) = \ad_w \ga\al\ga^{-1}(b) = w(\ga\al\ga^{-1}(b))w^* \qforal b\in B .
\]

In this case the semicrossed products $A\times_\al \bZ_+$ and
$B\times_\be \bZ_+$ are completely isometrically isomorphic isomorphic.
To see this, write elements of $B\times_\be \bZ_+$ as Fourier series
in a variable $\Bw$ and coefficients in $B$.
Observe that for $a \in A$,
\[
 \ga(a) (\Bw w) = \Bw \beta\ga(a) w = \Bw w (\ga\al\ga^{-1}\ga(a)) w^* w
 = (\Bw w) \ga\al(a).
\]
Thus $(\gamma, \Bw w)$ is a covariant representation of $(A,\alpha)$.
Therefore there is a completely contractive homomorphism $\phi$ of
$A\times_\al \bZ_+$ into $B\times_\be \bZ_+$
so that $\phi|_A= \ga$ and $\phi(\Bv)=\Bw w$.
Similarly there is a map $\psi$ of $B\times_\be \bZ_+$ into $A\times_\al \bZ_+$
such that $\psi|_B = \ga^{-1}$ and $\psi(\Bw) = \Bv v$.
It is easy to check that $\psi = \phi^{-1}$, and thus $\phi$ is a
unital completely isometric isomorphism.

In this paper, we are concerned with the converse.
Suppose that $A\times_\al \bZ_+$ and $B\times_\be \bZ_+$ are (completely)
isometrically isomorphic. Are $(A,\al)$ and $(B,\be)$ outer conjugate?
A positive answer for the injective and surjective cases is the content of Theorem~\ref{T:main}.
Theorem~\ref{T:main2} includes extensions to many situations where
$\al$ is not injective. This requires a careful analysis of the sequence of ideals $\ker(\al^k)$
and their closed union, $R_\al$.

\section{Algebraic Manipulations} \label{S:alg}

Suppose that $\phi$ is an isomorphism of $A\times_\al \bZ_+$ onto $B\times_\be \bZ_+$.
The universal property shows \cite{Kak11} that $\phi$ is completely isometric
if and only if it is isometric.
We assume from now on that $\phi$ is a fixed (completely) isometric isomorphism.
A standard argument, using the fact that $A$ is the largest C*-algebra contained in
$\scp A \alpha$ and that an isometric isomorphism of a C*-algebra is a
$*$-isomorphism, shows that $\ga = \phi|_A$ is a $*$-isomorphism of $A$ onto $B$.
Conversely, if $\phi|_A$ is a $*$-isomorphism of $A$ onto $B$
and $\|\phi(\Bv)\| = 1 = \|\phi^{-1}(\Bw)\|$, then $\phi$ is completely isometric.

In this section, we develop a variety of algebraic results that are quite
ring theoretic in nature. The only use of the isometric hypothesis is
the consequence mentioned in the previous paragraph that the restriction
to the C*-algebra $A$ is a $*$-isomorphism onto $B$.

Consider the image of the generator $\Bv$ of $A\times_\al \bZ_+$ under $\phi$.
Using the Fourier decomposition, we may write
\[
 \phi(\Bv) = b_0 + \Bw b_1 + \Bw^2 Y   \qand
 \phi^{-1}(\Bw) = a_0 + \Bv a_1 + \Bv^2 X ,
\]
where $Y \in \scp B \be$ and $X \in \scp A \al$.
Note that the elements of the form $\Bw^2 Y$ form a closed ideal of $\scp B \be$
consisting of those elements $Z$ such that $E_0(Z)=E_1(Z)=0$.

\begin{lem} \label{L:identities} 
For all $a\in A$ we obtain:
\begin{enumerate}
\item $\ga(a)\, b_0 \ =\, b_0\, \ga\al(a)
 \qand\ \,  \be\ga(a)\, b_1 = b_1\, \ga\al(a)$.

\item $b_0b_0^*$ lies in the centre $Z(B)$;
$b_0b_0^*$ and $b_1^*b_1$ commute with $\ga\al(A)$;
and $b_1b_1^*$ commutes with $\be(B)$.
\end{enumerate}
The symmetrical relations hold for $a_0$ and $a_1$. Moreover,
\begin{enumerate}
\item[(iii)] $a_1 \ga^{-1}(b_1)$ commutes with $\al(A)$.
\end{enumerate}
\end{lem}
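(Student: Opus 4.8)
The plan is to feed the two covariance relations through $\phi$ and $\phi^{-1}$ and then read off the low-order Fourier coefficients, exactly as the section heading (``Algebraic Manipulations'') suggests. For (i) I would apply $\phi$ to the identity $a\Bv = \Bv\al(a)$ of $\scp A \al$, which gives $\ga(a)\phi(\Bv) = \phi(\Bv)\ga\al(a)$. Substituting the expansion $\phi(\Bv) = b_0 + \Bw b_1 + \Bw^2 Y$ and commuting each coefficient $\ga(a)\in B$ past the powers of $\Bw$ via the rule $b\Bw = \Bw\be(b)$ turns the left side into $\ga(a) b_0 + \Bw\,\be\ga(a) b_1 + \Bw^2\,\be^2\ga(a) Y$ and the right side into $b_0\ga\al(a) + \Bw\, b_1\ga\al(a) + \Bw^2\, Y\ga\al(a)$. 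The two degree-$\ge 2$ tails lie in the closed ideal $\{Z : E_0(Z)=E_1(Z)=0\}$ identified just before the lemma, so applying $E_0$ and $E_1$ and matching coefficients yields precisely $\ga(a) b_0 = b_0\ga\al(a)$ and $\be\ga(a) b_1 = b_1\ga\al(a)$. The symmetric identities for $a_0,a_1$ come verbatim by applying $\phi^{-1}$ to $b\Bw = \Bw\be(b)$ and using that $\ga^{-1}=\phi^{-1}|_B$ is a $*$-isomorphism.

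For (ii) I would pass to adjoints. Since $\ga$ and $\al$ are $*$-preserving, taking the adjoint of $\ga(a) b_0 = b_0\ga\al(a)$ and relabelling $a$ gives the companion relation $b_0^*\ga(a) = \ga\al(a) b_0^*$; sandwiching then yields $b_0 b_0^*\ga(a) = b_0\ga\al(a) b_0^* = \ga(a) b_0 b_0^*$, and because $\ga$ is onto $B$ this forces $b_0 b_0^*\in Z(B)$, whence it commutes with $\ga\al(A)$ as well. The identical adjoint-and-sandwich manoeuvre applied to $\be\ga(a) b_1 = b_1\ga\al(a)$ shows that $b_1^* b_1$ commutes with $\ga\al(A)$ and that $b_1 b_1^*$ commutes with $\be\ga(A) = \be(B)$, once more invoking surjectivity of $\ga$.

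The one statement genuinely coupling the two systems is (iii), and this is where I expect the only real bookkeeping. I would first transport the relation $\be\ga(a) b_1 = b_1\ga\al(a)$ back to $A$ by applying the $*$-isomorphism $\ga^{-1}$, obtaining $(\ga^{-1}\be\ga)(a)\,\ga^{-1}(b_1) = \ga^{-1}(b_1)\,\al(a)$. From the symmetric first-order identity for $a_1$, specialised at $b=\ga(a)$, I get $\al(a)\,a_1 = a_1\,(\ga^{-1}\be\ga)(a)$. Chaining these,
\[
 a_1\ga^{-1}(b_1)\al(a) = a_1(\ga^{-1}\be\ga)(a)\ga^{-1}(b_1) = \al(a)\,a_1\ga^{-1}(b_1),
\]
so $a_1\ga^{-1}(b_1)$ commutes with $\al(A)$. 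The delicate point throughout is keeping careful track of which conjugated endomorphism $\ga^{-1}\be\ga$ (rather than $\al$ itself) appears at each step, since it is exactly this twisting that will later feed the outer-conjugacy conclusion. No analytic input beyond the contractivity of the Fourier coefficients and the $*$-isomorphism property of $\ga$ is required, so the whole lemma is purely algebraic once the isometric hypothesis has been used to make $\ga=\phi|_A$ a $*$-isomorphism onto $B$.
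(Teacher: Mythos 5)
Your proposal is correct and follows essentially the same route as the paper: equate Fourier coefficients of $\phi(a\Bv)=\phi(\Bv\al(a))$ for (i), use the adjoint-and-sandwich argument together with surjectivity of $\ga$ for (ii), and chain the transported $b_1$-relation with the symmetric $a_1$-relation for (iii). The paper's proof of (iii) is literally your computation, just written with the $\ga^{-1}$ applied to the product $b_1\ga\al(a)$ rather than distributed first, so there is nothing to add.
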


\begin{proof}
Rewrite the covariance relations for $(A,\al)$ under $\phi$, for $a \in A$, as
\begin{align*}
 &\hspace{9mm} \phi(a\Bv) \hspace{2mm} = \ga(a) b_0 + \ga(a) \Bw b_1 + \ga(a) \Bw^2 Y \\
 &\hspace{22mm} = \ga(a) b_0 + \Bw \be\ga(a) b_1 + \Bw^2 \be^2\ga(a)  Y\\
 &= \phi(\Bv \al(a)) = b_0 \ga\al(a) + \Bw b_1 \ga\al(a) +  \Bw^2 Y \ga\al(a) .
\end{align*}
Therefore, equating the Fourier coefficients of the two expressions, we obtain that
\[
 \ga(a)\, b_0 = b_0\, \ga\al(a) \qand
 \be\ga(a)\, b_1 = b_1\, \ga\al(a)
 \qforal a \in A .
\]
This yields (i).
Since $\ga$ is an isomorphism of $A$ onto $B$, we obtain the useful variant
\[
 bb_0 = b_0 \, (\ga\al\ga^{-1})(b) \qand
 \be(b) b_1 = b_1\, (\ga\al\ga^{-1})(b)  \qforal b \in B.
\]
Since $A$ is a C*-algebra, conjugation of the first identity yields
\[ b_0^* \ga(a) = \ga\al(a) b_0^* \qforal a \in A.\]
For $b\in B$, let $a = \ga^{-1}(b)$. We obtain
\begin{align*}
 (b_0b_0^*) b &= b_0b_0^* \ga(a) = b_0 \ga\al(a) b_0^* 
 = \ga(a)  b_0 b_0^* = b (b_0 b_0^*).
\end{align*}
This establishes the first part of (ii).
The other statements are established similarly.
By symmetry, we get the analogous relations for $a_0$ and $a_1$.

Statement (iii) is also similar. Let $a \in A$. Then
\begin{align*}
a_1 \ga^{-1}(b_1) \al(a) &= a_1 \ga^{-1}\big( b_1 \ga\al(a) \big) \\&
= a_1 \ga^{-1}\big( \be\ga(a) b_1 \big)
= \al(a) a_1 \ga^{-1}(b_1). \qedhere
\end{align*}
\end{proof}

\begin{lem} \label{L:rightinv}
In the notation above, $a_1$ and $b_1$ are right invertible.
\end{lem}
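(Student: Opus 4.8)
The plan is to exploit the two inverse relations $\Bv = \phi^{-1}(\phi(\Bv))$ and $\Bw = \phi(\phi^{-1}(\Bw))$ and to read off the coefficient of the first-order term. Writing $\Bw = \phi(\phi^{-1}(\Bw)) = \ga(a_0) + \phi(\Bv)\ga(a_1) + \phi(\Bv)^2\phi(X)$ and using that $\phi|_A=\ga$ is a $*$-isomorphism together with the multiplicativity of $E_0$, I would first apply $E_0$ to obtain the degree-zero constraint $\ga(a_0) + b_0\ga(a_1) + b_0^2 E_0(\phi(X)) = 0$, and then extract the degree-one part. To keep the latter computation finite I would pass to the quotient of $\scp{B}{\be}$ by the closed ideal $\{Z : E_0(Z)=E_1(Z)=0\}$ identified before Lemma~\ref{L:identities}; this quotient is the algebra of matrices $\left(\begin{smallmatrix} c_0 & 0\\ c_1 & \be(c_0)\end{smallmatrix}\right)$, i.e. the two-level orbit representation, in which $\Bw^2$ and hence every tail $\Bw^2 Y$ vanishes. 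Carrying out the matrix multiplication yields the governing identity $1 = b_1\ga(a_1) + (\text{terms each carrying a factor } b_0 \text{ or } \be(b_0))$, and symmetrically $1 = a_1\ga^{-1}(b_1) + (\text{terms carrying a factor }a_0)$ on the $A$-side.

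If the constant terms $b_0$ and $a_0$ were zero this identity would immediately display $\ga(a_1)$ as a right inverse for $b_1$, so the whole difficulty is to show that the $b_0$-terms do not obstruct right invertibility. For this I would bring in the isometric hypothesis in its strongest form: since $\phi$ is completely isometric it extends to a $*$-isomorphism of the C*-envelopes, under which $\phi(\Bv)$ and $\phi^{-1}(\Bw)$ become isometries (recall that $\Bv$ and $\Bw$ are isometries in their C*-envelopes). Taking the gauge-invariant part of $\phi(\Bv)^*\phi(\Bv)=1$ gives $b_0^* b_0 + b_1^* b_1 + (\text{positive}) = 1$; and multiplying the inverse relation on the left by $\phi(\Bv)^*$ and cancelling $\phi(\Bv)^*\phi(\Bv)=1$ produces the cleaner relation $b_1^* = (1-b_0^* b_0)\bigl(\ga(a_1)+b_0 E_0(\phi(X))\bigr)$. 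I would then combine these with Lemma~\ref{L:identities}: the intertwining identities $\be(b)b_1 = b_1\,\ga\al\ga^{-1}(b)$ and $b b_0 = b_0\,\ga\al\ga^{-1}(b)$, the centrality of $b_0 b_0^*$, and the fact that $a_1\ga^{-1}(b_1)$ commutes with $\al(A)$. Using $\be(b_0)b_1 = b_1\,\ga\al\ga^{-1}(b_0)$ one can absorb one offending term into $b_1 B$, leaving a residual obstruction of the form $\be(b_0)^2(\cdots)$.

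The main obstacle is exactly this residual constant term, and the crux of the argument is to show it is negligible by iterating the extraction over all higher Fourier levels rather than only the first two: repeating at level $n$ should push the obstruction to $\be(b_0)^n(\cdots)$, and the estimate $\|b_0^n\|=\|E_0(\phi(\Bv^n))\|\le\|\Bv^n\|=1$ keeps these tails bounded. The delicate point, where I expect the real work to lie, is passing from ``$1$ lies in $b_1 B + \be(b_0)^n B$ for every $n$'' to the exact statement ``$1\in b_1 B$''; since right invertibility is not a closed condition, this forces one to use the structural relations above, notably that $b_1\ga(a_1)$ commutes with $\be(B)$ and that $b_0 b_0^*$ is central, to manufacture an honest right inverse rather than an approximate one. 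Once $b_1$ is shown to be right invertible, $a_1$ follows by the symmetric argument obtained from interchanging $(A,\al)$ and $(B,\be)$.
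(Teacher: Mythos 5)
Your proposal stalls exactly where you admit it does, and that admitted gap is fatal. From the exact identity $\Bw = \phi(\phi^{-1}(\Bw)) = \ga(a_0) + \phi(\Bv)\ga(a_1) + \phi(\Bv)^2\phi(X)$, the degree-one Fourier coefficient is
\[
 1 = b_1\ga(a_1) + \big(\be(b_0)b_1 + b_1 b_0\big)E_0(\phi(X)) + \be(b_0^2)\,E_1(\phi(X)),
\]
and while the middle terms can indeed be absorbed into $b_1B$ via the intertwining relation $\be(b)b_1 = b_1\,\ga\al\ga^{-1}(b)$ of Lemma~\ref{L:identities}, the last term cannot, because $X$ --- unlike the coefficient of a polynomial --- has Fourier modes of every degree. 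Passing from ``$1\in b_1B + \be(b_0)^nB$ for all $n$'' to ``$1\in b_1B$'' is precisely the unproved crux, and nothing in your outline (the C*-envelope, the isometry $\phi(\Bv)^*\phi(\Bv)=1$, centrality of $b_0b_0^*$) supplies it; moreover your intermediate identity $b_1^* = (1-b_0^*b_0)\big(\ga(a_1)+b_0E_0(\phi(X))\big)$ does not follow from the stated manipulations. The obstruction you hit is genuine: the paper only extracts invertibility from such exact identities under extra hypotheses, namely $a_0=0$ (Lemma~\ref{L:a0=0}) or $\|a_0\|$ small (Remark~\ref{R:a0small}), and much of the rest of the paper exists precisely because these terms cannot be waved away.

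The paper's proof of this lemma sidesteps the obstruction with a different, simpler idea: use \emph{surjectivity} of $\phi$ and norm-density of polynomials to pick a polynomial $p=\sum_{k=0}^n\Bv^kx_k$ with $\|\phi(p)-\Bw\|<1/2$. Because the coefficients $\ga(x_k)$ lie in $B$ and have no higher Fourier modes, $E_1(\phi(p))$ computes \emph{exactly} as an element of $b_1B$:
\[
 E_1\big(\phi(\Bv)^k\ga(x_k)\big) = \sum_{i=0}^{k-1}\be(b_0^i)\,b_1\,b_0^{k-1-i}\ga(x_k)
 = b_1\sum_{i=0}^{k-1}\ga\al\ga^{-1}(b_0^i)\,b_0^{k-1-i}\ga(x_k),
\]
so $E_1(\phi(p)) = b_1c$ for some $c\in B$. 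Contractivity of $E_1$ then gives $\|b_1c-1\| = \|E_1(\phi(p)-\Bw)\| < 1/2$, hence $b_1c$ is invertible and $c(b_1c)^{-1}$ is an honest right inverse of $b_1$. Note how this resolves the very point you flagged: right invertibility need not be obtained as a limit, because one sufficiently good approximant suffices --- invertibility of $b_1c$ is an \emph{open} condition. If you replace your exact inverse identities by this approximation argument, the proof closes in a few lines.
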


\begin{proof}
It suffices to prove the result for $b_1$. Observe that since $\phi$ is surjective,
there is a polynomial $p = \sum_{k=0}^n \Bv^k x_k$, with $x_k\in A$,
so that $\|\phi(p) - \Bw \| < 1/2$. Compute
\begin{align*}
 E_1(\phi(p)) &= E_1 \sum_{k=0}^n \phi(\Bv)^k \ga(x_k) \\
 &= \sum_{k=0}^n E_1 \big((b_0 + \Bw b_1 + \Bw^2 Y)^k \ga(x_k)\big) \\
 &= \sum_{k=0}^n \sum_{i=0}^{k-1} E_1 \big(b_0^i \Bw b_1 b_0^{k-1-i} \ga(x_k)\big) \\
 &= \sum_{k=0}^n \sum_{i=0}^{k-1} \be(b_0^i) b_1 b_0^{k-1-i} \ga(x_k) \\
 &= b_1 \sum_{k=0}^n \sum_{i=0}^{k-1} \ga\al\ga^{-1}(b_0^i)\, b_0^{k-1-i} \ga(x_k)
 = b_1 c.
\end{align*}
Therefore
\[ \| b_1c - 1\| = \| E_1\big( \phi(p) - \Bw \big) \| < 1/2 .\]
It follows that $b_1c$ is invertible, and hence $b_1$ is right invertible.
\end{proof}

\begin{cor} \label{C:ker}
If $\phi$ is an isometric isomorphism of $A\times_\al \bZ_+$ onto $B\times_\be \bZ_+$,
then $\ga(\ker\al) = \ker\be$.
\end{cor}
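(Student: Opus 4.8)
The plan is to extract $\ker\al$ and $\ker\be$ directly from the covariance identities in Lemma~\ref{L:identities}, using the right-invertibility of $b_1$ and $a_1$ from Lemma~\ref{L:rightinv} to perform the necessary cancellation. Recall that $\ga = \phi|_A$ is a $*$-isomorphism of $A$ onto $B$, so it suffices to establish the single inclusion $\ga(\ker\al) \subseteq \ker\be$; the reverse inclusion then follows by applying the same reasoning to $\phi^{-1}$.

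First I would take $a \in \ker\al$, so that $\ga\al(a) = \ga(0) = 0$. Feeding this into the second identity of Lemma~\ref{L:identities}(i) gives
\[ \be\ga(a)\, b_1 = b_1\, \ga\al(a) = 0. \]
Since $b_1$ is right invertible by Lemma~\ref{L:rightinv}, choose $d \in \scp B \be$ with $b_1 d = 1$. Multiplying the displayed identity on the right by $d$ yields $\be\ga(a) = \be\ga(a)(b_1 d) = 0$, so $\ga(a) \in \ker\be$. This establishes $\ga(\ker\al) \subseteq \ker\be$.

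For the reverse inclusion I would invoke the symmetric relations for $a_0$ and $a_1$ noted at the end of Lemma~\ref{L:identities}. These give $\al\ga^{-1}(b)\, a_1 = a_1\, \ga^{-1}\be(b)$ for all $b \in B$. If $b \in \ker\be$ then $\ga^{-1}\be(b) = 0$, so the right-hand side vanishes, and the right-invertibility of $a_1$ cancels $a_1$ to produce $\al\ga^{-1}(b) = 0$, i.e.\ $\ga^{-1}(b) \in \ker\al$. Hence $\ker\be \subseteq \ga(\ker\al)$, and combining the two inclusions gives the equality.

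I do not expect a genuine obstacle here: the whole argument is a one-line cancellation once the two preceding lemmas are in hand. The only point that requires care is selecting the correct identity---the one involving $b_1$ rather than $b_0$---so that the factor $\ga\al(a)$ sits on the side where it is annihilated by the kernel hypothesis, and then cancelling $b_1$ on the correct (right) side using its right inverse.
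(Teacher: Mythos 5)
Your proposal is correct and is essentially the paper's own proof: both take $a \in \ker\al$, use Lemma~\ref{L:identities}(i) to get $\be\ga(a)\,b_1 = b_1\,\ga\al(a) = 0$, cancel $b_1$ on the right via its right inverse from Lemma~\ref{L:rightinv}, and obtain the reverse inclusion by symmetry (you merely spell out the symmetric step that the paper compresses into ``Similarly''). The only cosmetic imprecision is that the right inverse $d$ of $b_1$ lies in $B$ itself, not merely in $\scp B \be$, but this does not affect the cancellation argument.
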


\begin{proof}
If $a \in\ker\al$, then
\[ \be\ga(a)b_1 = b_1 \ga\al(a) = 0 .\]
Since $b_1$ has a right inverse, $\ga(a) \in\ker\be$.
Thus $\ga(\ker\al) \subset \ker\be$.
Similarly, $\ga^{-1} (\ker\be) \subset \ker\al$. Therefore we have equality.
\end{proof}

We now establish a key general result.

\begin{keylem} \label{L:key}
If $b_1$ is invertible, then $(A,\al)$ and $(B,\be)$ are outer conjugate.
\end{keylem}

\begin{proof}
By Lemma~\ref{L:identities}(ii), $b_1^*b_1$ commutes with $\ga\al(A)$.
Thus $|b_1| = (b_1^*b_1)^{1/2}$ also commutes with $\ga\al(A)$.
Let $b_1 = w|b_1|$ be the polar decomposition.
Since $b_1$ is invertible by hypothesis, $w$ is unitary.
Compute
\begin{align*}
 \big( w\, \ga\al(a)\, w^* \big) b_1 &=
 w \, \ga\al(a) |b_1| = w|b_1|\, \ga\al(a)   \\&=
 b_1\, \ga\al(a)  = \be\ga(a)\, b_1 .
\end{align*}
Therefore $w\, \ga\al(a)\, w^* = \be\ga(a)$.  Substituting $b=\ga(a)$ yields
\[ \be(b) = (\ad_w \ga\al\ga^{-1}) (b) \qforal b \in B .\]
This establishes outer conjugacy.
\end{proof}

The following is a special case in which we obtain invertibility.

\begin{lem} \label{L:a0=0}\ 
\begin{enumerate}
\item  If $a_0=0$ or $b_0=0$, then $a_0=0=b_0$,
$b_1$ and $a_1$ are unitary, and $a_1 \ga^{-1}(b_1)=1$.

\item If $X=0$ or $Y=0$, then $X=0=Y$,
$b_1$ and $a_1$ are unitary, and $a_1 \ga^{-1}(b_1)=1$.
\end{enumerate}
Consequently, $\phi(\Bv)=\Bw b_1$, $\phi^{-1}(\Bw)=\Bv a_1$ and $\ga(a_1)=b_1^*$, in either case.
\end{lem}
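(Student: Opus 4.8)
The plan is to reduce the whole statement to the single assertion that $b_1$ is unitary with $a_1\ga^{-1}(b_1)=1$, and to harvest everything else from two ``Parseval'' identities living in the C*-envelope. Since $\phi$ is completely isometric, it extends to a $*$-isomorphism $\widetilde\phi$ of $C^*_e(\scp A \al)$ onto $C^*_e(\scp B \be)$; as $\Bv$ is an isometry in its C*-envelope, so are $\phi(\Bv)=\widetilde\phi(\Bv)$ and, symmetrically, $\phi^{-1}(\Bw)$. The gauge action extends to $C^*_e(\scp B \be)$, and averaging it over $\bT$ produces a \emph{faithful} conditional expectation $\widetilde E_0$ onto the fixed-point algebra (averaging a compact group action is always faithful). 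Applying $\widetilde E_0$ to $\phi(\Bv)^*\phi(\Bv)=1$ and discarding the off-diagonal terms, which carry nonzero gauge degree, leaves
\[
 b_0^*b_0 + b_1^*b_1 + \widetilde E_0\big((\Bw^2 Y)^*\,\Bw^2 Y\big) = 1 ,
\]
with the symmetric identity $a_0^*a_0 + a_1^*a_1 + \widetilde E_0((\Bv^2 X)^*\Bv^2 X)=1$ on the $A$-side; each summand is positive. Granting $b_1$ unitary and $a_1\ga^{-1}(b_1)=1$, one has $a_1=\ga^{-1}(b_1^*)$, which is unitary, so $b_1^*b_1=1=a_1^*a_1$; positivity then forces $b_0^*b_0=0$, $a_0^*a_0=0$ and $\widetilde E_0((\Bw^2 Y)^*\Bw^2 Y)=0=\widetilde E_0((\Bv^2 X)^*\Bv^2 X)$, whence faithfulness of $\widetilde E_0$ yields $b_0=Y=a_0=X=0$. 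Thus $\phi(\Bv)=\Bw b_1$ and $\phi^{-1}(\Bw)=\Bv a_1$, and applying $\ga$ to $a_1\ga^{-1}(b_1)=1$ gives $\ga(a_1)=b_1^{-1}=b_1^*$.

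It therefore remains to prove, under either hypothesis, that $b_1$ is unitary and $a_1\ga^{-1}(b_1)=1$. I would obtain invertibility of $b_1$ by Fourier-coefficient bookkeeping applied to $\phi\circ\phi^{-1}=\id$ and $\phi^{-1}\circ\phi=\id$, using that $E_0$ is a homomorphism. In case (i), comparing $E_0$-coefficients of $\Bw=\phi(\phi^{-1}(\Bw))$ gives $0=\ga(a_0)+b_0\ga(a_1)+b_0^2E_0(\phi(X))$, so $b_0=0$ forces $a_0=0$ and, symmetrically, $a_0=0$ forces $b_0=0$; with $a_0=b_0=0$ the $E_1$-coefficients yield $b_1\ga(a_1)=1$ and $a_1\ga^{-1}(b_1)=1$, and applying $\ga$ to the second shows $\ga(a_1)$ is also a left inverse of $b_1$. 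In case (ii), say $Y=0$, the element $\phi(\Bv)=b_0+\Bw b_1$ has degree at most one, so the $E_1$-coefficient of $\Bv=\phi^{-1}(\phi(\Bv))$ reads $a_1\ga^{-1}(b_1)=1$, giving $\ga(a_1)b_1=1$, while Lemma~\ref{L:rightinv} supplies a right inverse; the case $X=0$ is symmetric. In all cases $b_1$ is invertible with $b_1^{-1}=\ga(a_1)$. Finally I promote invertibility to unitarity using the two Parseval identities: they force $b_1^*b_1\le1$ and $a_1^*a_1\le1$, so $\|b_1\|\le1$ and $\|b_1^{-1}\|=\|\ga(a_1)\|=\|a_1\|\le1$, and an invertible element with $\|b_1\|\le1$ and $\|b_1^{-1}\|\le1$ is unitary.

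The main obstacle is precisely the step that cannot be carried out inside the Fourier algebra. The algebraic identities deliver only that $b_1$ is invertible and that $a_1\ga^{-1}(b_1)=1$; indeed one checks (e.g. in case (ii), where the $E_0$-relations collapse to $b_0^2E_0(\phi(X))=0$) that these are consistent with $b_0\ne0$. Sharpening invertibility to unitarity, and annihilating the remainder terms $b_0,Y,a_0,X$, genuinely requires the C*-structure---positivity together with the faithful gauge-invariant expectation on the C*-envelope---and this is where the isometric, hence completely isometric, hypothesis on $\phi$ does its real work.
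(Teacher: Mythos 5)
Your proposal is correct, and its first half follows the paper's own skeleton: the same Fourier-coefficient bookkeeping applied to $\phi^{-1}\circ\phi=\id$ and $\phi\circ\phi^{-1}=\id$ (the paper expands the composite directly, writing $\phi^{-1}(Y)=c_0+\Bv c_1+\Bv^2 Z$, while you exploit the homomorphism property of $E_0$ together with $E_1$ --- the same computation, slightly tidier), then right invertibility of $b_1$ from Lemma~\ref{L:rightinv}, and finally the observation that an invertible contraction with contractive inverse is unitary. Where you genuinely diverge is the step that annihilates $b_0$, $Y$, $a_0$, $X$. The paper stays concrete: it takes a faithful representation $\pi$ of $B$, forms the orbit representation $(\wpi, I\otimes S)$, and notes that the zeroth column of the resulting operator matrix for $\phi(\Bv)$ is $(\pi(b_0),\pi(b_1),\pi(b_2),\dots)^t$, which has norm at most one; unitarity of $b_1$ then forces every other entry to vanish. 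You instead pass to the C*-envelope: extend $\phi$ to a $*$-isomorphism of envelopes, extend the gauge action there, and apply the faithful averaging expectation $\widetilde E_0$ to $\phi(\Bv)^*\phi(\Bv)=1$ to obtain the Parseval identity $b_0^*b_0+b_1^*b_1+\widetilde E_0\big((\Bw^2Y)^*\Bw^2Y\big)=1$, and its mirror image on the $A$-side. Both are positivity arguments of Parseval type, and both are valid. Yours buys a symmetric, representation-free formulation that treats both cases and both sides uniformly (in particular you never need the paper's separate algebraic step $\Bv^2X\ga^{-1}(b_1)=0$ to get $X=0$), but at the cost of heavier standard machinery: functoriality of the C*-envelope under complete isometries, the fact (recorded in Section~\ref{S:back}) that $\Bw$ is an isometry in its envelope, the extension and point-norm continuity of the gauge action on the envelope, and faithfulness of the averaged expectation. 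The paper's route needs only a faithful representation of $B$ and universality of the semicrossed product, so it is more self-contained; your parenthetical claim that in case (ii) the $E_0$-relations ``collapse to $b_0^2E_0(\phi(X))=0$'' is stated loosely (that identity only emerges after invertibility of $b_1$ is substituted back in), but it is a side remark and does not affect the proof.
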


\begin{proof}
By symmetry we can assume that $a_0=0$ or $Y=0$. 
Let us write $\phi^{-1}(Y) = c_0+\Bv c_1 + \Bv^2 Z$.  
First we compute
\begin{align*}
 \Bv &= \phi^{-1}(\phi(\Bv)) = \phi^{-1}(b_0+\Bw b_1 + \Bw^2Y) \\
 &= \gamma^{-1}(b_0) + (a_0+\Bv a_1 + \Bv^2 X)\gamma^{-1}(b_1) \\ &\quad+
 (a_0+\Bv a_1 + \Bv^2 X)^2 (c_0+\Bv c_1 + \Bv^2 Z) \\
 &= \big( \ga^{-1}(b_0) + a_0 \ga^{-1}(b_1) + a_0^2 c_0 \big) \\&\quad
 + \Bv \big( a_1 \ga^{-1}(b_1) + \al(a_0)a_1c_0 + a_1a_0c_0 + \al(a_0)^2c_1 \big) + \Bv^2 Z'
\end{align*}
Therefore
\begin{align*}
 0 &= \ga^{-1}(b_0) + a_0 \ga^{-1}(b_1) + a_0^2 c_0, \\
 \intertext{and}
 1 &= a_1 \ga^{-1}(b_1) + \al(a_0)a_1c_0 + a_1a_0c_0 + \al(a_0)^2c_1.
\end{align*}

If $a_0=0$, then it follows that $b_0=0$, and so $a_1 \ga^{-1}(b_1) = 1$. 
If $Y=0$, then $c_0=c_1=0$; and therefore $a_1 \ga^{-1}(b_1)=1$ again. 
In either case $\ga^{-1}(b_1)$ is left invertible. Thus $b_1$ is invertible,
and $a_1=\ga^{-1}(b_1^{-1})$ is also invertible.
Since $a_1$ and $b_1$ are  contractions, it follows that they must be unitary.

Also when $Y=0$, the identity for $\Bv= \phi^{-1}(\phi(\Bv))$ simplifies
and yields $\Bv^2X\ga^{-1}(b_1)=0$. Since $b_1$ is right invertible, $X=0$ as well. 
By symmetry, it suffices to check just these cases.

Now choose a faithful representation $\pi$ of $B$ and consider the
orbit representation $(\tilde\pi,I\otimes S)$ that it induces.
Applying this to $\phi(\Bv)$ yields a matrix operator with the first column
equal to $(b_0,b_1,b_2,\dots)^t$ where $\Bv^2 Y = \sum_{n\ge2} \Bv^n b_n$.
This column has norm at most one, and $b_1$ is unitary.
Hence $b_n = 0$ for all $n \ne 1$. 
That is, $b_0=0$ and $Y=0$, and thus  $\phi(\Bv) = \Bw b_1$.
Similarly $\phi^{-1}(\Bw) = \Bv a_1$.
\end{proof}

The following remark provides invertibility in cases when $a_0$, 
and as a consequence $X$, are not zero.

\begin{rem} \label{R:a0small}
\textit{If $\|a_0\| < \delta :=\tfrac 23{\sqrt{3}} -1 \simeq 0.1547$,
then $b_1$ and $a_1$ are invertible.}

We know that $\|a_1\| \le \|\Bw\| = 1$ and $\|Y\| \le 1 + \|b_0\|+\|b_1\| \le 3$.
Since $\phi^{-1}(Y) = c_0+\Bv c_1 + \Bv Z$, we see that $\|c_0\|\le 3$ and $\|c_1\| \le 3$.
So we can plug these estimates into the identity obtained in the
proof of the previous lemma to obtain
\begin{align*}
 \| 1 - a_1 \gamma^{-1}(b_1) \| &= \| \al(a_0)a_1c_0 + a_1a_0c_0 + \al(a_0)^2c_1\| \\&
 \le 6\|a_0\| + 3 \|a_0\|^2 < 6 \delta + 3 \delta ^2 = 1 .
\end{align*}
It follows that  $a_1\ga^{-1}(b_1)$ is invertible.
Therefore $\gamma^{-1}(b_1)$ is left invertible; and thus $b_1$ is invertible.
Moreover, if $c$ is the inverse of $a_1\ga^{-1}(b_1)$, then $b_1^{-1} = \ga(ca_1)$.
Hence $\ga^{-1}(b_1)c$ is the inverse of $a_1$.
\end{rem}

When $(A,\al)$ is not injective, we need careful analysis of the
family of kernels $\ker(\al^n)$ for $n\ge1$ and their closed union
$R_\al = \ol{ \cup_n \ker\al^n}$, known as the \textit{radical ideal} .
It is straightforward to prove that $a\in R_\al$ if and
only if $\lim_n \al^n(a)=0$.

\begin{lem} \label{L:radical}
If $A\times_\al \bZ_+$ is isometrically isomorphic to $B\times_\be \bZ_+$,
then there are right invertible elements $b_n \in B$ such that
\[
 \be^n\ga (a) \, b_n = b_n \, \ga \al^n(a) \qforal a \in A.
\]
\end{lem}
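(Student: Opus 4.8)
The plan is to argue by induction on $n$, constructing the $b_n$ as products of iterated $\be$-images of $b_1$. The case $n=1$ is already available: the second relation of Lemma~\ref{L:identities}(i) reads $\be\ga(a)\,b_1 = b_1\,\ga\al(a)$ for all $a\in A$, and Lemma~\ref{L:rightinv} guarantees that $b_1$ is right invertible. This is exactly the asserted statement for $n=1$, so it serves as the base of the induction.

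For the inductive step I would assume that a right invertible $b_n$ has been produced satisfying $\be^n\ga(a)\,b_n = b_n\,\ga\al^n(a)$ for all $a\in A$, and then set
\[
 b_{n+1} = \be(b_n)\,b_1 .
\]
Right invertibility of $b_{n+1}$ is immediate: since $\be$ is a unital $*$-endomorphism it carries a right inverse of $b_n$ to a right inverse of $\be(b_n)$, and a product of two right invertible elements is again right invertible (if $xx'=1$ and $yy'=1$ then $(xy)(y'x')=1$). For the intertwining identity, apply $\be$ to the inductive hypothesis to get $\be^{n+1}\ga(a)\,\be(b_n) = \be(b_n)\,\be\ga\al^n(a)$, multiply this on the right by $b_1$, and then invoke the base relation with $a$ replaced by $\al^n(a)$, namely $\be\ga\al^n(a)\,b_1 = b_1\,\ga\al^{n+1}(a)$. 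Concatenating the two gives $\be^{n+1}\ga(a)\,b_{n+1} = b_{n+1}\,\ga\al^{n+1}(a)$, which completes the induction. Unwinding the recursion exhibits the closed form $b_n = \be^{n-1}(b_1)\,\be^{n-2}(b_1)\cdots\be(b_1)\,b_1$.

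The computation itself is routine, so there is no genuine obstacle; the one point demanding care is that $\be$ is merely an endomorphism and need not be surjective. Consequently I cannot work with two-sided inverses at any stage: I must keep every invertibility statement one-sided and rely only on the fact that $\be$ preserves right invertibility. This is precisely why the conclusion claims that the $b_n$ are right invertible rather than invertible, and it is the feature that makes the non-injective, non-surjective case delicate later on.
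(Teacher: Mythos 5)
Your proposal is correct and follows essentially the same route as the paper: the same recursion $b_{n+1}=\be(b_n)b_1$ (equivalently $b_n = \be^{n-1}(b_1)\cdots\be(b_1)b_1$), right invertibility propagated via the unital endomorphism $\be$ and Lemma~\ref{L:rightinv}, and the same induction on the intertwining identity using the base relation with $a$ replaced by $\al^n(a)$. Your added care about keeping all invertibility one-sided matches the paper's statement exactly.
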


\begin{proof}
We will prove by induction that $b_n = \be^{n-1}(b_1) \cdots b_1$.
Note that $b_{n+1}=\be(b_n)b_1$.
Since $b_1$ is right invertible by Lemma~\ref{L:rightinv}, it follows
that each $b_n$ is also right invertible.

It has already been shown that
\[ \be\ga(a)\, b_1 = b_1\, \ga\al(a)  \qforal a \in A . \]
Suppose that
\[ \be^n\ga (a) \, b_n = b_n \, \ga \al^n(a) \qforal a \in A . \]
Then
\begin{align*}
 \be^{n+1}\ga(a) b_{n+1} &=\be(\be^n\ga(a)) \be(b_n)b_1 =
 \be(\be^n\ga (a) b_n) b_1 \\&=
 \be(b_n \ga \al^n(a)) b_1  = \be(b_n) \be \ga (\al^n(a)) b_1 \\&=
 \be(b_n)b_1 \ga\al (\al^n(a)) = b_{n+1} \ga\al^{n+1}(a) .
 \qedhere
\end{align*}
\end{proof}

\begin{cor}\label{C:kernels}
If $\phi$ is an isometric isomorphism of $A\times_\al \bZ_+$ onto $B\times_\be \bZ_+$,
then $\ga(\ker(\al^n)) = \ker(\be^n)$ for $n\ge1$, and $\ga(R_\al) = R_\be$.
\end{cor}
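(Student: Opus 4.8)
The plan is to run the argument of Corollary~\ref{C:ker} at each level $n\ge1$, now using the right invertible intertwiners $b_n$ produced by Lemma~\ref{L:radical}, and then deduce the radical statement formally from the resulting family of kernel equalities. The tools already in hand are the relation $\be^n\ga(a)\,b_n = b_n\,\ga\al^n(a)$ for all $a\in A$ together with the right invertibility of each $b_n$.

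First I would establish the inclusion $\ga(\ker(\al^n)) \subseteq \ker(\be^n)$. If $a \in \ker(\al^n)$, then $\ga\al^n(a)=0$, so the intertwining relation gives $\be^n\ga(a)\,b_n = b_n\,\ga\al^n(a) = 0$. Since $b_n$ is right invertible, multiplying on the right by a right inverse of $b_n$ yields $\be^n\ga(a)=0$, that is, $\ga(a)\in\ker(\be^n)$. For the reverse inclusion I would apply the same reasoning to $\phi^{-1}$, which is again an isometric isomorphism with the roles of $(A,\al)$ and $(B,\be)$ interchanged and $\ga$ replaced by $\ga^{-1}$. Lemma~\ref{L:radical} applied in that direction produces right invertible elements $a_n\in A$ satisfying $\al^n\ga^{-1}(b)\,a_n = a_n\,\ga^{-1}\be^n(b)$ for all $b\in B$; taking $b=\ga(a)$ with $\ga(a)\in\ker(\be^n)$ gives $\al^n(a)\,a_n = 0$, whence $\al^n(a)=0$ by right invertibility of $a_n$. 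Combining the two containments yields $\ga(\ker(\al^n)) = \ker(\be^n)$ for every $n\ge1$.

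For the radical ideals I would invoke that $\ga=\phi|_A$ is a $*$-isomorphism, hence a bijective isometry, so it preserves both arbitrary unions and norm closures. Since $R_\al = \ol{\bigcup_{n}\ker(\al^n)}$, the kernel equalities give
\[
 \ga(R_\al) = \ol{\ga\big(\textstyle\bigcup_n \ker(\al^n)\big)} = \ol{\textstyle\bigcup_n \ga(\ker(\al^n))} = \ol{\textstyle\bigcup_n \ker(\be^n)} = R_\be .
\]

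I do not anticipate a genuine obstacle here. Everything reduces to the intertwining identity and the right invertibility already secured in Lemma~\ref{L:radical} and Lemma~\ref{L:rightinv}, and the radical statement is a purely formal consequence of the kernel equalities together with the fact that $\ga$ is an isometric bijection. The only point demanding a little care is to apply Lemma~\ref{L:radical} symmetrically to $\phi^{-1}$ so as to obtain the reverse containment $\ga^{-1}(\ker(\be^n))\subseteq\ker(\al^n)$; once that is done the equality of kernels, and hence of radicals, follows immediately.
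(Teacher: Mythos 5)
Your proof is correct and follows essentially the same route as the paper: the paper also deduces $\ga(\ker(\al^n)) = \ker(\be^n)$ by running the argument of Corollary~\ref{C:ker} with the right-invertible intertwiners $b_n$ of Lemma~\ref{L:radical} (together with the symmetric argument for $\phi^{-1}$), and then obtains $\ga(R_\al) = R_\be$ by passing to closed unions. The only difference is that you spell out the symmetric application of Lemma~\ref{L:radical} and the closure-preservation step, which the paper leaves implicit.
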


\begin{proof}
Corollary~\ref{C:ker} showed that $\ga(\ker\al) = \ker\be$.
Using Lemma~\ref{L:radical}, the same argument shows that
$\ga(\ker\al^n) = \ker\be^n$ for each $n \ge 1$.
Therefore $\ga(R_\al) = R_\be$.
\end{proof}

If $J$ is an ideal of a C*-algebra $B$, then the \textit{annihilator} of $J$ is
\[  \ann(J) = \{ b \in B : bJ = 0 \} = \{ b\in B : Jb = 0 \} .\]
It is a consequence of the fact that $J$ has a bounded approximate identity
that the left and right annihilators coincide, and hence $ \ann(J)$ is a (closed) ideal of $B$.

\begin{lem} \label{L:annihilator}
With notation as before, $a_0 \in \ann(R_\al)$ and
$b_0 \in \ann(R_\be)$.
\end{lem}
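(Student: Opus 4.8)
The plan is to prove $a_0 \in \ann(R_\al)$; the assertion $b_0 \in \ann(R_\be)$ then follows by the symmetry between the two systems (interchanging $(A,\al,\ga)$ with $(B,\be,\ga^{-1})$), so there is nothing extra to do there. The starting point is the symmetric version of Lemma~\ref{L:identities}(i) for $a_0$, namely $\ga^{-1}(b)\,a_0 = a_0\,\ga^{-1}\be(b)$ for all $b \in B$. Substituting $b = \ga(a)$, this reads
\[
 a\,a_0 = a_0\,\delta(a) \qforal a \in A, \quad\text{where } \delta := \ga^{-1}\be\ga .
\]

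The key structural observation is how $\delta$ interacts with the kernel filtration. By Corollary~\ref{C:kernels} we have $\ga(\ker\al^n) = \ker\be^n$, and since $\be$ carries $\ker(\be^n)$ into $\ker(\be^{n-1})$, it follows that $\delta$ maps $\ker(\al^n)$ into $\ker(\al^{n-1})$. With this in hand I would prove, by induction on $n$, that $a_0 \in \ann(\ker\al^n)$ for all $n \ge 0$. The base case $n=0$ is trivial, since $\ker(\al^0) = \{0\}$. For the inductive step, take $r \in \ker(\al^n)$; the displayed identity gives $r\,a_0 = a_0\,\delta(r)$, and since $\delta(r) \in \ker(\al^{n-1})$, the inductive hypothesis $a_0\,\ker(\al^{n-1}) = 0$ forces $r\,a_0 = 0$. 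Thus $\ker(\al^n)\,a_0 = 0$, and because the left and right annihilators of an ideal coincide, $a_0 \in \ann(\ker\al^n)$.

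Finally, knowing $\ker(\al^n)\,a_0 = 0$ for every $n$, I would take the union over $n$ and pass to the closure: continuity of multiplication gives $R_\al\,a_0 = \ol{\bigcup_n \ker\al^n}\,a_0 = 0$, that is, $a_0 \in \ann(R_\al)$. The one delicate point is arranging the algebra so that $a_0$ ends up on the correct side: in $r\,a_0 = a_0\,\delta(r)$ the factor $\delta(r)$ sits to the right of $a_0$ and lives in the strictly smaller ideal $\ker(\al^{n-1})$, which is exactly what lets the induction close (iterating the identity directly would leave $a_0$ on the wrong side and stall). Everything else—the two-sidedness of $\ann$ and the closure argument—is routine.
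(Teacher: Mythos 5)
Your proof is correct and takes essentially the same approach as the paper's: the paper proves $b_0 \in \ann(R_\be)$ directly from the identity $\ga(a)b_0 = b_0\,\ga\al(a)$, Corollary~\ref{C:kernels}, and induction along the kernel filtration, obtaining $a_0$ by symmetry, while you run the mirror-image induction on the $A$-side using $\de = \ga^{-1}\be\ga$. The ingredients---covariance identity, kernel correspondence, two-sidedness of the annihilator, and passing to the closure---are identical.
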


\begin{proof}
By symmetry, it suffices to establish the result for $b_0$.
Recall Lemma~\ref{L:identities}(i):
\[ \ga(a) b_0 = b_0 \ga\al(a) \qforal a \in A .\]
If $x \in \ker\be$, then $x = \ga(a)$ for some $a \in \ker\al$ by Corollary~\ref{C:kernels}.
Thus
\[ xb_0 = \ga(a) b_0 = b_0 \ga\al(a) = 0. \]
Hence $b_0 \in \ann(\ker\be)$.

Assume $b_0 \in \ann(\ker(\be^n))$.
Take $x \in \ker(\be^{n+1})$.
Since $\ga(\ker(\al^{n+1})) = \ker(\be^{n+1})$ by Corollary~\ref{C:kernels},
there is an $a \in \ker(\al^{n+1})$ so that $x = \ga(a)$.
Moreover since $\al(a) \in \ker(\al^n)$, we get
\[  xb_0 = \ga(a) b_0 = b_0 \ga\al(a) = 0 \]
by the inductive hypothesis.
Hence $b_0 \in \ann(\ker(\be^{n+1})$.

Taking the union for $n \ge 1$ yields $b_0 \in \ann(R_\be)$.
\end{proof}

\section{Main Theorem~\ref{T:main}} \label{S:main}

\subsection{The surjective case} 

First we assume that $\al$ is surjective.

\begin{thm} \label{T:surj}
Let $(A,\al)$ and $(B,\be)$ be unital C*-algebra dynamical systems.
Suppose that $A\times_\al \bZ_+$ and $B\times_\be \bZ_+$ are isometrically isomorphic.
If $\al$ is surjective, then $(A,\al)$ and $(B,\be)$ are outer conjugate.
\end{thm}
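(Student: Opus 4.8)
The plan is to reduce the surjective case to the Key Lemma~\ref{L:key}: it suffices to prove that $b_1$ is invertible, since that lemma then delivers outer conjugacy of $(A,\al)$ and $(B,\be)$ directly. We already know from Lemma~\ref{L:rightinv} that $b_1$ is right invertible, say $b_1 c = 1$ for some $c \in B$, so the entire task is to promote this one-sided invertibility to two-sided invertibility. No information about $b_0$, $X$, or $Y$ is needed for this route.

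The crucial observation is that surjectivity of $\al$ forces $b_1^*b_1$ to be \emph{central}. Indeed, since $\al(A)=A$ and $\ga$ is onto, we have $\ga\al(A) = \ga(A) = B$. Lemma~\ref{L:identities}(ii) asserts that $b_1^*b_1$ commutes with $\ga\al(A)$, and with $\ga\al(A)=B$ this immediately upgrades to $b_1^*b_1 \in Z(B)$. This is the single place where surjectivity does all the work: in general we only control the commutant of the proper subalgebra $\ga\al(A)$, but surjectivity enlarges that subalgebra to all of $B$.

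Writing $t = b_1^*b_1 \in Z(B)$, I would then combine centrality with the right inverse by a short algebraic manipulation. From $b_1 c = 1$ we get $b_1^* = b_1^*(b_1 c) = t c$, and taking adjoints of $b_1 c = 1$ gives $1 = c^*b_1^* = c^* t c = t\, c^*c$, where the last equality uses that $t$ commutes with $c$. Since $t$ is central it also commutes with $c^*c$, so $t$ is invertible with $t^{-1} = c^*c$. Because $b_1^*b_1 = t$ is invertible, $b_1$ is bounded below, hence left invertible (with left inverse $t^{-1}b_1^*$); together with the right inverse $c$ this makes $b_1$ invertible. Key Lemma~\ref{L:key} then finishes the proof.

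There is essentially no hard step once the centrality of $b_1^*b_1$ is in hand; the only point requiring care is the one that might look suspicious, namely that right invertibility of an operator need not imply invertibility. That worry is resolved precisely by the centrality of $b_1^*b_1$, which converts the scalar-like identity $t\,c^*c = 1$ into genuine two-sided invertibility of $t$. I would emphasize that the surjective hypothesis enters only through the identity $\ga\al(A)=B$, and that this exact manipulation breaks down without surjectivity, since then $b_1^*b_1$ need not be central and one must instead undertake the more delicate analysis of $R_\al$ and $\ann(R_\al)$ developed in Section~\ref{S:alg}.
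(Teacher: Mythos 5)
Your proof is correct, and it follows the paper's overall strategy---right invertibility of $b_1$ from Lemma~\ref{L:rightinv}, then full invertibility, then the Key Lemma~\ref{L:key}---but the mechanism by which you exploit surjectivity is genuinely different. The paper uses part (i) of Lemma~\ref{L:identities} in the form $\be(b)\,b_1 = b_1\,\ga\al\ga^{-1}(b)$: since $\ga\al\ga^{-1}$ is surjective, one picks $b\in B$ with $\ga\al\ga^{-1}(b)=c$, and then $\be(b)\,b_1 = b_1c = 1$ exhibits an explicit left inverse, so $b_1$ is invertible in two lines. You instead use part (ii): surjectivity gives $\ga\al(A)=\ga(A)=B$, so $t=b_1^*b_1$ is central, and then the identity $1=(b_1c)^*(b_1c)=c^*\,t\,c=t\,(c^*c)$ together with centrality of $t$ yields that $t$ is invertible, hence $b_1$ is left invertible with left inverse $t^{-1}b_1^*$, hence invertible. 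Both arguments are valid and short; the paper's is slightly more direct and produces the left inverse explicitly, whereas yours isolates a structural consequence of surjectivity---that $b_1^*b_1\in Z(B)$---which nicely parallels the proof of the Key Lemma itself, where commutation of $|b_1|$ with $\ga\al(A)$ is the engine. Your closing observation about why the argument breaks without surjectivity applies equally to both routes: in the paper's proof one can no longer solve $\ga\al\ga^{-1}(b)=c$, and in yours $b_1^*b_1$ need only commute with the proper subalgebra $\ga\al(A)$.
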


\begin{proof}
It suffices to show that $b_1$ is invertible.
If $b_1 c = 1$, then since $\ga\al\ga^{-1}$ is surjective, there is an element
$b\in B$ so that $\ga\al\ga^{-1}(b)=c$.  Therefore
\[ \be(b)b_1 = b_1 \ga\al\ga^{-1}(b) = b_1 c = 1 .\]
So $b_1$ has a left and a right inverse; and hence it is invertible.
Therefore $(A,\al)$ and $(B,\be)$ are outer conjugate by the Key Lemma~\ref{L:key}.
\end{proof}

The following corollary is an immediate consequence which we record for further use and reference.

\begin{cor}\label{C:auto}
Let $(A,\al)$ and $(B,\be)$ be unital C*-algebra dynamical systems.
Suppose that $A\times_\al \bZ_+$ and $B\times_\be \bZ_+$ are isometrically isomorphic.
If $\al$ is an automorphism, then $(A,\al)$ and $(B,\be)$ are outer conjugate.
\end{cor}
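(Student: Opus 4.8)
The plan is to observe that this statement falls out of the surjective case with essentially no additional work, which is exactly why it is recorded as a corollary rather than a theorem. The key point is simply that an automorphism of a C*-algebra is, by definition, a \emph{bijective} $*$-endomorphism, and in particular it is surjective. Thus the hypothesis ``$\al$ is an automorphism'' is a strictly stronger condition than the hypothesis ``$\al$ is surjective'' appearing in Theorem~\ref{T:surj}.

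Concretely, I would proceed as follows. Given the isometric isomorphism $\phi$ of $A\times_\al \bZ_+$ onto $B\times_\be \bZ_+$, I first note that since $\al$ is an automorphism it is surjective, so all the hypotheses of Theorem~\ref{T:surj} are met. I would then invoke that theorem directly: its proof shows that a right inverse $c$ of $b_1$ (which exists by Lemma~\ref{L:rightinv}) can be pulled back through the surjection $\ga\al\ga^{-1}$ to produce a left inverse, forcing $b_1$ to be invertible, and then the Key Lemma~\ref{L:key} delivers outer conjugacy. Nothing in that argument uses injectivity of $\al$, only surjectivity, so the conclusion transfers verbatim.

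Since the result is an immediate specialization, I do not anticipate any genuine obstacle; the only thing to be careful about is making explicit that ``automorphism'' entails ``surjective'' so that the appeal to Theorem~\ref{T:surj} is unambiguous. One could alternatively remark that an automorphism is \emph{injective} as well, so Theorem~\ref{T:main} (the injective case) would equally apply, but routing through the surjective Theorem~\ref{T:surj} is the cleaner and more direct path and is presumably the intended one given the placement of this corollary.
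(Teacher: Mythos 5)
Your proof is correct and is exactly the paper's approach: the corollary is recorded as an immediate consequence of Theorem~\ref{T:surj}, since an automorphism is in particular surjective. One caution regarding your closing remark: appealing instead to the injective case would be circular in the paper's development, because the proof of Theorem~\ref{T:inj} proceeds by a direct-limit construction that reduces to automorphic systems and then invokes this very corollary, so the surjective route is not merely cleaner but logically required at this point.
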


\begin{rem}
It follows that if $A\times_\al \bZ_+$ and $B\times_\be \bZ_+$ are isometrically
isomorphic, then $\al$ is an automorphism if and only if $\be$ is an automorphism;
and $\al$ is surjective if and only if $\be$ is surjective.
\end{rem}

\subsection{The injective case} \label{Ss:injective}

In this subsection, we assume that $\al$ is injective.
In this case, there is a standard method to imbed $(A,\al)$ into an automorphic system.

Let $(A,\al)$ be an injective dynamical system.
Define the direct limit dynamical system $(A_\infty,\al_\infty)$ by
\begin{align*}
 \xymatrix{
  A \ar[r]^{\al} \ar[d]^\al &
  A \ar[r]^{\al} \ar[d]^\al &
  A \ar[r]^{\al} \ar[d]^\al &
  \cdots \ar[r] &
  A_\infty \ar[d]^{\al_\infty} \\
  A \ar[r]^\al &
  A \ar[r]^\al &
  A \ar[r]^\al &
  \cdots \ar[r] &
  A_\infty
 }
\end{align*}
Then $A$ is contained injectively in $A_\infty$, say via a map $\imath$.
The limit map $\al_\infty$ is an automorphism.

Let the unitary generator of the crossed product $A_\infty \rtimes_{\al_\infty} \bZ$
be $\tilde\Bv$.
It is shown in \cite{KakKat10, Kak11} that the semicrossed product
$A\times_\al \bZ_+$ is unitally completely isometrically isomorphic to the
subalgebra generated by $i(A)$ and $\tilde\Bv$.

Note that $ad_{\tilde\Bv^*}$ defines an injective endomorphism of $A \times_\al \bZ_+$.
Therefore we can construct the direct sequence
\begin{align*}
\xymatrix{ A \times_\al \bZ_+ \ar[r]^{\ad_{\tilde\Bv^*}} &
 A \times_{\al} \bZ_+ \ar[r]^{\ad_{\tilde\Bv^*}} &
 A \times_{\al} \bZ_+ \ar[r]^{\ad_{\tilde\Bv^*}} &
 \cdots
 }
\end{align*}
Observe that $ad_{\tilde\Bv^*}$ induces (concretely) the injective endomorphism $\al$.
Also, $ad_{\tilde\Bv^*}(\tilde\Bv)=\tilde\Bv$.
Hence it follows that the direct limit of the above sequence is
$A_\infty \times_{\al_\infty} \bZ_+$.
Let $\tilde\imath$ denote the injection of $\scp A \al$ into
$A_\infty \times_{\al_\infty} \bZ_+$ extending $\imath$.
Observe that $\tilde\imath(\Bv) = \tilde\Bv$.

Now assume that $\phi$ is an isometric isomorphism of $A\times_\al \bZ_+$
onto $B\times_\be \bZ_+$.
Then $\be$ is also injective by Corollary~\ref{C:ker}.
So we can perform the same direct limit construction for $(B,\be)$
to obtain an automorphic limit system $(B_\infty,\be_\infty)$.
We denote the unitary generator of the crossed product
$B_\infty \rtimes_{\be_\infty} \bZ$ by $\tilde\Bw$.
Let $\jmath$ be the injection of $B$ into $B_\infty$;
and let $\tilde\jmath$ denote the injection of $\scp B \be$ into $\scp{B_\infty}{\be_\infty}$.

Construct the following diagram
\begin{align*}
 \xymatrix{
 A \times_\al \bZ_+ \ar[r]^{\ad_{\tilde\Bv^*}} \ar[d]^{\phi_0} &
 A \times_{\al} \bZ_+ \ar[r]^{\ad_{\tilde\Bv^*}} \ar[d]^{\phi_1} &
 A \times_{\al} \bZ_+ \ar[r]^{\ad_{\tilde\Bv^*}} \ar[d]^{\phi_2} &
 \cdots \ar[r] &
 \scp{A_\infty}{\al_\infty} \ar[d]^{\phi_\infty}\\
 B\times_\be \bZ_+ \ar[r]^{\ad_{\tilde\Bw^*}} &
 B \times_{\be} \bZ_+ \ar[r]^{\ad_{\tilde\Bw^*}} &
 B \times_{\be} \bZ_+ \ar[r]^{\ad_{\tilde\Bw^*}} &
 \cdots  \ar[r] &
 \scp{B_\infty}{\be_\infty}
 }
\end{align*}
where $\phi_n:=ad_{\tilde\Bw^*} \circ \phi_{n-1} \circ ad_{\tilde\Bv}$ for $n\in \bZ_+$.
The diagram clearly commutes.
Therefore the limit map $\phi_\infty$ is a completely isometric isomorphism
of $\scp{A_\infty}{\al_\infty}$ onto $\scp{B_\infty}{\be_\infty} $.

Moreover, one can see that the isomorphism $\ga_\infty$ of $A_\infty$ onto $B_\infty$
extends $\ga$.
Also the generator $\tilde\Bv = \tilde\imath(\Bv)$ and $\tilde\jmath(\Bw) = \tilde\Bw$.
Therefore
\[
 \phi_\infty(\tilde\Bv) = \tilde\jmath( \phi(\Bw) ) =
 \jmath(b_0) + \tilde\Bw \jmath(b_1) + \tilde\Bw^2 \jmath(Y) .
\]

This yields a proof of our theorem in the injective case.

\begin{thm} \label{T:inj}
Let $(A,\al)$ and $(B,\be)$ be unital C*-algebra dynamical systems.
Suppose that $A\times_\al \bZ_+$ and $B\times_\be \bZ_+$ are isometrically isomorphic.
If $\al$ is injective, then $(A,\al)$ and $(B,\be)$ are outer conjugate.
\end{thm}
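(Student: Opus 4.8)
The plan is to reduce to the automorphic case that has already been handled and then to transport the resulting invertibility back down to the original system. The preceding construction gives a completely isometric isomorphism $\phi_\infty$ of the automorphic semicrossed product $\scp{A_\infty}{\al_\infty}$ onto $\scp{B_\infty}{\be_\infty}$, together with the explicit formula $\phi_\infty(\tilde\Bv) = \jmath(b_0) + \tilde\Bw\,\jmath(b_1) + \tilde\Bw^2\,\jmath(Y)$. Thus the $E_1$-coefficient of the image of the generator under $\phi_\infty$ is exactly $\jmath(b_1)$, and the entire algebraic apparatus of Section~\ref{S:alg} applies verbatim to $\phi_\infty$ with $b_1$ replaced by $\jmath(b_1)$. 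My goal is to show that $b_1 \in B$ is invertible, for then the Key Lemma~\ref{L:key} applied to the original isomorphism $\phi$ immediately yields that $(A,\al)$ and $(B,\be)$ are outer conjugate.

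First I would establish invertibility upstairs in $B_\infty$. Applying Lemma~\ref{L:identities}(i) to $\phi_\infty$ gives the variant relation $\be_\infty(y)\,\jmath(b_1) = \jmath(b_1)\,\ga_\infty\al_\infty\ga_\infty^{-1}(y)$ for all $y \in B_\infty$. Since $b_1$ is right invertible in $B$ by Lemma~\ref{L:rightinv} and $\jmath$ is a unital $*$-homomorphism, $\jmath(b_1)$ is right invertible in $B_\infty$. Because $\al_\infty$ is an automorphism, $\ga_\infty\al_\infty\ga_\infty^{-1}$ is surjective, so the computation in the proof of Theorem~\ref{T:surj} produces a left inverse as well; hence $\jmath(b_1)$ is invertible in $B_\infty$. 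Equivalently, one may simply invoke Corollary~\ref{C:auto} for the automorphic pair $(A_\infty,\al_\infty)$, $(B_\infty,\be_\infty)$, whose proof passes through this invertibility.

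The crux is the descent step: transferring invertibility from $B_\infty$ back to $B$. Here I would use spectral permanence for C*-subalgebras. Since all the connecting maps are unital, $\jmath$ embeds $B$ as a unital C*-subalgebra of $B_\infty$ sharing the same unit, and for such an inclusion an element is invertible in the larger algebra if and only if it is invertible in the subalgebra. Applying this to the positive element $b_1^*b_1$, invertibility of $\jmath(b_1)$ forces $\jmath(b_1^*b_1)$ to be invertible in $B_\infty$, hence $b_1^*b_1$ is invertible in $B$, so $b_1$ is left invertible; combined with Lemma~\ref{L:rightinv} this makes $b_1$ invertible in $B$. With $b_1$ invertible, the Key Lemma~\ref{L:key} concludes that $(A,\al)$ and $(B,\be)$ are outer conjugate.

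The main obstacle I anticipate is precisely this descent: the automorphic machinery only directly yields outer conjugacy of the \emph{dilated} systems $(A_\infty,\al_\infty)$ and $(B_\infty,\be_\infty)$, which is weaker than what we want. The payoff of routing the argument through the single element $b_1$ --- rather than through the abstract outer conjugacy of the limit systems --- is that invertibility is a spectral condition, and spectral permanence lets it descend cleanly along the unital C*-embedding $\jmath$, delivering outer conjugacy of the original systems via the Key Lemma.
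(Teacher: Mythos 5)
Your proposal is correct and follows essentially the same route as the paper: reduce to the automorphic limit systems via the direct limit construction, obtain invertibility of $\jmath(b_1)$ in $B_\infty$ from the automorphic case, and descend along the unital inclusion $\jmath(B)\subseteq B_\infty$ by spectral permanence. The paper packages the descent slightly differently --- it observes that the conjugating unitary $w$ (the polar part of $\jmath(b_1)$) lies in $\jmath(B)$ because $\jmath(b_1)$ is ``invertible in $B_\infty$, hence invertible in $\jmath(B)$,'' and then restricts the conjugacy relation to $B$ --- but that is the same spectral-permanence fact you invoke, and your variant of descending the invertibility of $b_1$ itself and then applying the Key Lemma~\ref{L:key} directly to $\phi$ is equally valid.
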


\begin{proof}
The preceding argument shows that the two automorphic systems
$(A_\infty,\al_\infty)$ and $(B_\infty,\be_\infty)$ have isometrically isomorphic
semicrossed products.
Therefore by Corollary~\ref{C:auto}, these two dynamical systems are outer conjugate.
Indeed, the proof shows more precisely that
\[ \be_\infty(b) = (\ad_w \ga_\infty\al_\infty\ga_\infty^{-1}) (b) \qforal b \in B_\infty \]
where $w$ is the unitary in the polar decomposition of $j(b_1)$.
In particular, $\jmath(b_1)$ is invertible in $B_\infty$,
and hence is invertible in $\jmath(B)$.
Therefore $w$ belongs to $\jmath(B)$.
Restricting to $b$ in $B$ yields the desired outer conjugacy of $(A,\al)$ and $(B,\be)$.
\end{proof}

\section{Main Theorem~\ref{T:main2}} \label{S:main2}

\subsection{Automatic Outer Conjugacy}

In this subsection we show how outer conjugacy is obtained in a number of cases, other than those described in Section~\ref{S:main}, because of \emph{a} structure of the C*-algebras.\\

First of all we can obtain the following strengthening of the main result of \cite{DKsimple}, which required $\al$ and $\be$ to be automorphisms.

\begin{cor} \label{C:simple}
Let $A$ be a simple C*-algebra; and
let $(A,\al)$ and $(B,\be)$ be unital C*-algebra dynamical systems.
Suppose that $A\times_\al \bZ_+$ and $B\times_\be \bZ_+$ are isometrically isomorphic.
Then $(A,\al)$ and $(B,\be)$ are outer conjugate.
\end{cor}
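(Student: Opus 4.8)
The plan is to reduce the statement immediately to the injective case already handled in Theorem~\ref{T:inj}, by observing that simplicity of $A$ forces $\al$ to be injective. This turns the corollary into a one-line consequence of the machinery already in place.

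The single substantive step is the remark that a unital $*$-endomorphism of a simple C*-algebra is automatically injective. Indeed, $\ker\al$ is a closed two-sided ideal of $A$, and it is proper since $\al(1)=1$ forces $1\notin\ker\al$. As $A$ is simple, its only proper closed ideal is $\{0\}$, so $\ker\al=\{0\}$ and $\al$ is injective. I would then invoke Theorem~\ref{T:inj} directly on the given isometric isomorphism of $A\times_\al \bZ_+$ onto $B\times_\be \bZ_+$ to conclude that $(A,\al)$ and $(B,\be)$ are outer conjugate.

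I do not expect any real obstacle here; the content is precisely that the injectivity hypothesis of Theorem~\ref{T:inj} need not be assumed but is supplied for free by simplicity. The only points worth recording are that this genuinely strengthens \cite{DKsimple}, which required $\al$ and $\be$ to be automorphisms (here neither surjectivity nor bijectivity is assumed), and that $\be$ is automatically injective too, since Corollary~\ref{C:ker} gives $\ga(\ker\al)=\ker\be$ and hence $\ker\be=\{0\}$. Alternatively one could route through the trivial-centre case of Theorem~\ref{T:main2}, using that a simple unital C*-algebra satisfies $Z(A)=\bC 1$ and that this transfers to $B=\ga(A)$; but the reduction to injectivity above is by far the most economical.
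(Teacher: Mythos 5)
Your proof is correct and is essentially identical to the paper's: the paper's entire argument is ``Since $A$ is simple, $\al$ is injective and Theorem~\ref{T:inj} applies,'' and you have merely spelled out the (standard) reason why a unital $*$-endomorphism of a simple unital C*-algebra must be injective. The extra remarks about $\be$ being injective via Corollary~\ref{C:ker} and the alternative trivial-centre route are fine but not needed.
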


\begin{proof}
Since $A$ is simple, $\al$ is injective and Theorem~\ref{T:inj} applies.
\end{proof}

We can improve this to the case of trivial centre.

\begin{cor} \label{C:trivial_centre}
If the centre of $A$ is trivial $(Z(A) = \bC 1)$, then
the two systems are outer conjugate.
\end{cor}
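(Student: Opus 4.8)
The plan is to reduce the statement to results already established by extracting a single scalar from $b_0b_0^*$ and letting it decide between two regimes. First I would note that $\ga = \phi|_A$ is a \emph{unital} $*$-isomorphism of $A$ onto $B$, so it carries centre to centre; hence $Z(B) = \ga(Z(A)) = \bC 1$. By Lemma~\ref{L:identities}(ii) the positive element $b_0b_0^*$ lies in $Z(B)$, so there is a scalar $\la \ge 0$ with $b_0b_0^* = \la 1$. The whole argument then turns on whether $\la$ is zero or not.

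The crux is to play this centrality against Lemma~\ref{L:annihilator}, which tells us $b_0 \in \ann(R_\be)$. Since $\ann(R_\be)$ is a (closed, two-sided) ideal of $B$, it also contains $b_0b_0^* = \la 1$. Suppose $\la \neq 0$. Dividing by $\la$ gives $1 \in \ann(R_\be)$, so $\ann(R_\be) = B$; applying the defining relation $B\, R_\be = 0$ to the unit forces $R_\be = 0$, and in particular $\ker\be = 0$. By Corollary~\ref{C:ker}, $\ker\al = \ga^{-1}(\ker\be) = 0$, so $\al$ is injective. Theorem~\ref{T:inj} then applies directly and yields that $(A,\al)$ and $(B,\be)$ are outer conjugate.

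In the complementary case $\la = 0$ we have $b_0b_0^* = 0$, hence $b_0 = 0$. Now Lemma~\ref{L:a0=0} shows that $b_1$ is unitary, and in particular invertible, so the Key Lemma~\ref{L:key} delivers outer conjugacy. This settles both cases, completing the proof.

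I do not expect a hard technical computation here; all the heavy lifting is already packaged in Section~\ref{S:alg} and in Theorem~\ref{T:inj}. The one step to get right—and the real content of the corollary—is the dichotomy itself: the observation that a nonzero scalar multiple of the identity cannot sit inside the proper-looking ideal $\ann(R_\be)$ unless that ideal is all of $B$, which collapses the radical $R_\be$ and drops us into the injective regime. Once that interaction is spotted, everything else is bookkeeping with the identities from Lemmas~\ref{L:identities}, \ref{L:annihilator}, and~\ref{L:a0=0}.
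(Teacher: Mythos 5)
Your proof is correct and follows essentially the same route as the paper: the dichotomy on $b_0$, using Lemma~\ref{L:identities}(ii) and Lemma~\ref{L:annihilator} to force $R_\be = 0$ and injectivity when $b_0 \neq 0$ (hence Theorem~\ref{T:inj}), and Lemma~\ref{L:a0=0} plus the Key Lemma~\ref{L:key} when $b_0 = 0$. Your write-up is in fact slightly more careful than the paper's, since you make explicit both that $Z(B) = \ga(Z(A)) = \bC 1$ and that injectivity of $\be$ transfers to $\al$ via Corollary~\ref{C:ker} before invoking Theorem~\ref{T:inj}.
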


\begin{proof}
By Lemma~\ref{L:identities}(ii), $b_0b_0^* \in Z(B)$.
If $b_0=0$, then $b_1$ is unitary by Lemma~\ref{L:a0=0}(i).
If $b_0\ne0$, then $b_0b_0^*$ is a non-zero scalar in
$\ann(R_\be)$ by Lemma~\ref{L:annihilator}.
Hence $R_\be = \{0\}$.  Therefore $\be$ is injective.
So $b_1$ is invertible by Theorem~\ref{T:inj}.
In either case, the two systems are outer conjugate by the Key Lemma~\ref{L:key}.
\end{proof}

When $A$ is commutative,
Lemma~\ref{L:rightinv} shows that $b_1$ is right invertible and thus invertible.
Thus Lemma~\ref{L:key} applies.
A stronger result is known in the commutative case \cite{DK},
namely: if  two semicrossed products of commutative C*-algebras
are \textit{algebraically isomorphic}, then the dynamical systems are conjugate.
We record the isometric case as another corollary.

\begin{prop} \label{P:abelian}
Let $A$ be an abelian C*-algebra; and
let $(A,\al)$ and $(B,\be)$ be unital C*-algebra dynamical systems.
Suppose that $A\times_\al \bZ_+$ and $B\times_\be \bZ_+$ are isometrically isomorphic.
Then $(A,\al)$ and $(B,\be)$ are conjugate.
\end{prop}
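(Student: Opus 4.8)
The plan is to reduce to the Key Lemma and then exploit commutativity to upgrade outer conjugacy to genuine conjugacy. First I would observe that since $\ga = \phi|_A$ is a $*$-isomorphism of $A$ onto $B$ and $A$ is abelian, $B$ is abelian as well. By Lemma~\ref{L:rightinv}, the element $b_1$ is right invertible; say $b_1 c = 1$. In a commutative C*-algebra, passing to the Gelfand transform shows that $b_1 c = 1$ forces $b_1$ to be nowhere vanishing on the maximal ideal space, so $b_1$ is in fact invertible. Hence the hypothesis of the Key Lemma~\ref{L:key} is satisfied.

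Applying the Key Lemma, the two systems $(A,\al)$ and $(B,\be)$ are outer conjugate: with $w$ the unitary in the polar decomposition $b_1 = w|b_1|$, we obtain $\be(b) = \ad_w \ga\al\ga^{-1}(b)$ for all $b \in B$. The decisive final step is to note that in the abelian algebra $B$ the inner automorphism $\ad_w$ is trivial, since $w b w^* = w w^* b = b$ for every $b \in B$. Therefore $\be = \ga\al\ga^{-1}$, that is $\be\ga = \ga\al$, which is precisely the statement that $(A,\al)$ and $(B,\be)$ are conjugate.

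I expect no serious obstacle here; the argument is a short corollary of the machinery already developed, and indeed the discussion immediately preceding the statement already notes that $b_1$ is invertible and that the Key Lemma applies. The only mild subtlety is the promotion of right invertibility to two-sided invertibility, which is immediate in the commutative setting but would fail in general. The genuine content is the observation that commutativity annihilates the inner part of the outer conjugacy, so that the generically weaker notion of outer conjugacy collapses to honest conjugacy for abelian systems.
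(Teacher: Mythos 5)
Your proof is correct and takes essentially the same route as the paper: the remarks preceding the proposition observe that commutativity upgrades the right invertibility of $b_1$ (Lemma~\ref{L:rightinv}) to invertibility so that the Key Lemma~\ref{L:key} applies, and the collapse of outer conjugacy to conjugacy follows because inner automorphisms of the abelian algebra $B$ are trivial. The paper leaves that last step implicit; you have merely spelled it out, along with the Gelfand-transform justification of the right-invertible-implies-invertible step.
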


Indeed, when $A$ is any finite C*-algebra (i.e.\ no proper isometries),
right invertibility implies invertibility.
So we have:

\begin{prop} \label{P:finite}
Let $A$ be a finite unital C*-algebra;
and let $(A,\al)$ and $(B,\be)$ be unital C*-algebra dynamical systems.
Suppose that the semicrossed products
$A\times_\al \bZ_+$ and $B\times_\be \bZ_+$ are isometrically isomorphic.
Then $(A,\al)$ and $(B,\be)$ are outer conjugate.
\end{prop}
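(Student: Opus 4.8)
The plan is to reduce everything to the Key Lemma~\ref{L:key} by proving that $b_1$ is invertible. Since $\ga = \phi|_A$ is a $*$-isomorphism of $A$ onto $B$ (as recalled at the start of Section~\ref{S:alg}), the algebra $B$ is $*$-isomorphic to the finite algebra $A$ and so is itself finite, having no proper isometries. Lemma~\ref{L:rightinv} already provides a right inverse $c$ with $b_1 c = 1$, so the only real work is to upgrade this right invertibility to genuine invertibility using finiteness.

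To carry this out I would first record the elementary fact that finiteness (no proper isometries) implies direct finiteness: in a finite unital C*-algebra, $xy = 1$ forces $yx = 1$. Applied to $b_1 c = 1$, fix a faithful representation of $B$ on a Hilbert space. Then for every vector $\xi$ we have $\|\xi\| = \|b_1 c\xi\| \le \|b_1\|\,\|c\xi\|$, so $c$ is bounded below (note $b_1 \ne 0$). Consequently $c^*c \ge \|b_1\|^{-2} 1$ is invertible, and $|c| = (c^*c)^{1/2}$ is an invertible positive element of $B$.

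Writing the polar decomposition $c = u|c|$ with $u = c|c|^{-1} \in B$, one checks that $u^*u = |c|^{-1} c^*c\, |c|^{-1} = 1$, so $u$ is an isometry in $B$. Since $B$ is finite, $u$ must be unitary, whence $c = u|c|$ is a product of two invertible elements and is therefore invertible. From $b_1 c = 1$ it follows that $b_1 = c^{-1}$ is invertible as well. The Key Lemma~\ref{L:key} then applies directly to conclude that $(A,\al)$ and $(B,\be)$ are outer conjugate.

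The only substantive step is the single implication that a finite C*-algebra is directly finite, i.e.\ that right invertibility forces invertibility; this is exactly the polar decomposition computation above, and once $|c|$ is seen to be invertible everything else is formal. All the remaining ingredients (right invertibility of $b_1$, transfer of finiteness through $\ga$, and the passage to outer conjugacy) are supplied by the earlier results.
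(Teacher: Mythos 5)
Your proposal is correct and takes essentially the same route as the paper: the paper likewise combines the right invertibility of $b_1$ from Lemma~\ref{L:rightinv} with the fact that in a finite unital C*-algebra right invertibility implies invertibility, and then invokes the Key Lemma~\ref{L:key}. The only difference is that the paper asserts this direct-finiteness fact without proof, whereas you supply the standard polar-decomposition argument (showing $c$ is bounded below, $|c|$ invertible, and the isometry $u=c|c|^{-1}$ unitary by finiteness), which is a correct filling-in of that step.
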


Since  it suffices that $b_1$ lie in a finite subalgebra of $B$ to reach this conclusion,
we obtain two  results of this type.

\begin{prop} \label{P:centre}
Let $(A,\al)$ and $(B,\be)$ be unital C*-algebra dynamical systems.
Suppose that $A\times_\al \bZ_+$ and $B\times_\be \bZ_+$ are isometrically isomorphic,
and that $b_1 = E_1 \phi(\Bv)$ belongs to the centre $Z(B)$.
Then $(A,\al)$ and $(B,\be)$ are outer conjugate.
\end{prop}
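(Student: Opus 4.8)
The plan is to reduce Proposition~\ref{P:centre} to the Key Lemma~\ref{L:key} by showing that the hypothesis $b_1 \in Z(B)$ forces $b_1$ to be invertible. By Lemma~\ref{L:rightinv}, $b_1$ is right invertible, so there exists $c \in B$ with $b_1 c = 1$. The goal is to upgrade this one-sided invertibility to genuine invertibility using the commutativity of $b_1$ with everything in $B$.

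First I would observe that in \emph{any} unital algebra, an element of the centre that is right invertible is automatically two-sided invertible. Indeed, if $b_1 c = 1$ and $b_1$ commutes with $c$, then $c b_1 = b_1 c = 1$, so $c$ is a genuine inverse. The only subtlety is that the right inverse $c$ produced by Lemma~\ref{L:rightinv} need not itself lie in $Z(B)$, so I cannot immediately invoke $b_1 c = c b_1$. To handle this cleanly, I would note that $b_1 b_1^* = b_1^* b_1 = |b_1|^2$ since $b_1$ is normal (being central), and that right invertibility of $b_1$ gives surjectivity of left multiplication by $b_1$, hence the closed range spectral condition that $0$ is not in the left spectrum. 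For a normal element of a C*-algebra, the left spectrum, right spectrum and ordinary spectrum all coincide, so $0 \notin \sigma(b_1)$ and $b_1$ is invertible.

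Alternatively, and perhaps more transparently for the write-up, I would argue directly: since $b_1$ is central and right invertible with $b_1 c = 1$, the element $b_1^* b_1 = b_1 b_1^*$ satisfies $b_1 b_1^* c^* = (b_1 c)^* {}$\,--- but the slickest route is simply that $b_1 c = 1$ together with centrality gives $c = c b_1 c = b_1 c^2$ only after knowing $c$ commutes with $b_1$, so I prefer the spectral argument. Concretely, $b_1$ normal implies that left invertibility of $b_1^*$ (equivalently right invertibility of $b_1$) forces $b_1^* b_1$ to be bounded below, and being a positive central element bounded below, $b_1^* b_1$ is invertible; then $b_1^{-1} = (b_1^* b_1)^{-1} b_1^*$ exhibits the inverse.

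Once $b_1$ is invertible, the conclusion is immediate from the Key Lemma~\ref{L:key}: the systems $(A,\al)$ and $(B,\be)$ are outer conjugate. I expect the main (very minor) obstacle to be the passage from right invertibility to two-sided invertibility; the centrality of $b_1$ makes this automatic, but one must be slightly careful that the right inverse need not be central, which is why the cleanest justification invokes normality of $b_1$ and the coincidence of one-sided and two-sided spectra for normal elements of a C*-algebra. Everything else is a direct citation of results already established in Sections~\ref{S:alg} and~\ref{S:main}.
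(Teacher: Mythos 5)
Your proof is correct, and it follows the same skeleton as the paper's: Lemma~\ref{L:rightinv} supplies a right inverse for $b_1$, the hypothesis $b_1 \in Z(B)$ upgrades this to two-sided invertibility, and the Key Lemma~\ref{L:key} finishes. The paper gets the upgrade by observing that $Z(B)$ is commutative, hence finite, and that it suffices for $b_1$ to lie in a finite subalgebra of $B$ (this is how Proposition~\ref{P:centre} is presented, as a companion to Proposition~\ref{P:finite}); your upgrade goes instead through normality of $b_1$ and the coincidence of one-sided and two-sided spectra for normal elements. Both routes are valid. However, the ``subtlety'' you flag --- that the right inverse $c$ from Lemma~\ref{L:rightinv} need not lie in $Z(B)$, so that you ``cannot immediately invoke $b_1 c = c b_1$'' --- is not a subtlety at all: $b_1 \in Z(B)$ means by definition that $b_1$ commutes with \emph{every} element of $B$, and $c$ is an element of $B$; whether $c$ itself is central is irrelevant. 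Hence $c b_1 = b_1 c = 1$ on the spot, and $b_1$ is invertible with inverse $c$. Your one-line opening observation was already a complete proof of invertibility, and the spectral machinery you substitute for it, while correct, is an unnecessary detour.
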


\begin{prop} \label{P:finite_commutant}
Let $(A,\al)$ and $(B,\be)$ be unital C*-algebra dynamical systems,
and suppose that $\al(A)'$, the commutant of $\al(A)$ in $A$, is a finite C*-algebra.
Then if $A\times_\al \bZ_+$ and $B\times_\be \bZ_+$ are isometrically isomorphic,
it follows that $(A,\al)$ and $(B,\be)$ are outer conjugate.
\end{prop}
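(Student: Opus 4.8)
The plan is to show that $b_1$ is invertible and then to invoke the Key Lemma~\ref{L:key}. The finiteness hypothesis concerns $\al(A)'$, a unital C*-subalgebra of $A$, and the subtle point is that the element naturally living in $\al(A)'$ is not $b_1$ itself — which only satisfies an intertwining relation with $\ga\al(A)$, not a commutation relation — but rather the product $x := a_1\,\ga^{-1}(b_1)$, which lies in $\al(A)'$ by Lemma~\ref{L:identities}(iii). So I would carry out the analysis with $x$ and then recover the invertibility of $b_1$ from that of $x$.

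First I would record that $x$ is right invertible in $A$. By Lemma~\ref{L:rightinv} both $a_1$ and $b_1$ are right invertible; since $\ga^{-1}$ is a $*$-isomorphism it preserves right invertibility, so $\ga^{-1}(b_1)$ is right invertible, and a product of right invertible elements is right invertible. Equivalently, $xx^*$ is invertible in $A$.

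Next I would upgrade this to genuine invertibility using finiteness. Since $x\in\al(A)'$ we also have $xx^*\in\al(A)'$, and by spectral permanence for the unital C*-subalgebra $\al(A)'\subseteq A$, the element $xx^*$ is invertible in $\al(A)'$. Setting $u := (xx^*)^{-1/2}\,x \in \al(A)'$ gives $uu^*=1$, so $u^*$ is an isometry in the finite algebra $\al(A)'$ and is therefore unitary; hence $u$ is unitary and $x=(xx^*)^{1/2}u$ is invertible in $\al(A)'$, and thus in $A$. Because $x=a_1\,\ga^{-1}(b_1)$ is invertible, $\ga^{-1}(b_1)$ acquires the left inverse $x^{-1}a_1$; combined with its right invertibility this makes $\ga^{-1}(b_1)$, and hence $b_1$, invertible. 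The Key Lemma~\ref{L:key} then delivers outer conjugacy.

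The main obstacle, and the one genuinely new idea here, is the recognition that the finiteness is best exploited not through $b_1$ but through the product $x=a_1\ga^{-1}(b_1)$ supplied by Lemma~\ref{L:identities}(iii); once that is in place, the only technical wrinkle is transporting invertibility of $xx^*$ from $A$ down into $\al(A)'$ via spectral permanence so that the polar-decomposition/finiteness argument can be run inside the finite algebra. The remaining manipulations — products of right invertibles, and peeling a right invertible factor off an invertible product — are routine.
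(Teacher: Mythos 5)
Your proof is correct and takes essentially the same route as the paper: both place $x = a_1\ga^{-1}(b_1)$ in $\al(A)'$ via Lemma~\ref{L:identities}(iii), use Lemma~\ref{L:rightinv} for right invertibility, invoke finiteness of $\al(A)'$ to upgrade to invertibility, deduce invertibility of $b_1$, and conclude with the Key Lemma~\ref{L:key}. The only difference is that you carefully justify the step the paper compresses into ``it lies in a finite algebra, and therefore it is invertible,'' namely the spectral permanence and isometry-is-unitary argument inside $\al(A)'$.
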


\begin{proof}
This result relies on Lemma~\ref{L:identities}(iii) that $a_1 \ga^{-1}(b_1) \in \al(A)'$.
By Lemma~\ref{L:rightinv}, we know that $a_1 \ga^{-1}(b_1)$ is right invertible.
By hypothesis, it lies in a finite algebra, and therefore it is invertible. So $b_1$
is invertible.  Hence the two systems are outer conjugate by the Key Lemma~\ref{L:key}.
\end{proof}

An immediate consequence of Lemma~\ref{L:a0=0} and the Key Lemma~\ref{L:key}
is the following.

\begin{prop} \label{P:b0=0}
Let $(A,\al)$ and $(B,\be)$ be unital C*-algebra dynamical systems.
Suppose that $A\times_\al \bZ_+$ and $B\times_\be \bZ_+$ are isometrically isomorphic.
If $b_0 := E_0\phi(\Bv) = 0$ or if $E_n\phi(\Bv)=0$ for all $n\ge2$ $($i.e.\ $Y=0)$, 
then $(A,\al)$ and $(B,\be)$ are outer conjugate.
\end{prop}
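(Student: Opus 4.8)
The plan is to recognise each of the two hypotheses as one of the degenerate cases already treated in Lemma~\ref{L:a0=0}, to extract from that lemma the invertibility (indeed unitarity) of $b_1$, and then to invoke the Key Lemma~\ref{L:key}.

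Recall that, with respect to the fixed isometric isomorphism $\phi$, we have the Fourier decomposition $\phi(\Bv) = b_0 + \Bw b_1 + \Bw^2 Y$, so that $b_0 = E_0\phi(\Bv)$ while the condition $E_n\phi(\Bv) = 0$ for all $n \ge 2$ is exactly $Y = 0$. Thus the two hypotheses read $b_0 = 0$ and $Y = 0$ respectively. In the first case, $b_0 = 0$ is one of the alternatives covered by Lemma~\ref{L:a0=0}(i), which concludes that $b_1$ is unitary. In the second case, $Y = 0$ is one of the alternatives covered by Lemma~\ref{L:a0=0}(ii), which again concludes that $b_1$ is unitary. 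Either way $b_1$ is invertible, so the Key Lemma~\ref{L:key} yields that $(A,\al)$ and $(B,\be)$ are outer conjugate.

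I do not expect any genuine obstacle at this level: the proposition is essentially a packaging of two earlier results. All the real work has already been done, namely in Lemma~\ref{L:a0=0}---where the vanishing of a single Fourier coefficient is bootstrapped, via the column-norm estimate in the faithful orbit representation, into the simultaneous vanishing of both $b_0$ and $Y$ together with the unitarity of $b_1$---and in the Key Lemma, where the polar decomposition of the invertible $b_1$ supplies the conjugating unitary. The only point worth pausing over is the bookkeeping identifying the hypothesis $E_n\phi(\Bv)=0$ for $n\ge2$ with $Y=0$, and this is immediate from the Fourier decomposition above.
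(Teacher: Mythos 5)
Your proof is correct and matches the paper's intent exactly: the paper offers no separate argument, stating only that the proposition is ``an immediate consequence of Lemma~\ref{L:a0=0} and the Key Lemma~\ref{L:key},'' which is precisely the two-step reduction you carry out. The case bookkeeping ($b_0=0$ via Lemma~\ref{L:a0=0}(i), $Y=0$ via Lemma~\ref{L:a0=0}(ii), then invertibility of $b_1$ feeding into the Key Lemma) is exactly right.
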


\subsection{Analysis of the Kernels}

We complete the proof of Theorem~\ref{T:main2}. First we need a general result about quotients of semicrossed products.

\begin{lem} \label{L:quotient}
Let $I$ be an ideal of $A$ such that $\al(I) \subset I$,
and let $\dot\al$ denote the induced endomorphism of $A/I$.
Let $I_\al$ denote the ideal of $\scp A \al$ generated by $I$,
namely
\[ I_\al = \{ X \in \scp A \al : E_n(X) \in I \FORAL n \ge 0 \} .\]
Then $\scp {A/I}{\dot\al}$ is completely isometrically isomorphic to
$\scp A \al/I_\al$.
\end{lem}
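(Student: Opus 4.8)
The plan is to exhibit mutually inverse completely contractive homomorphisms between $\scp{(A/I)}{\dot\al}$ and $\scp A\al/I_\al$; once we have contractive homomorphisms in both directions that invert one another on generators, they are automatically completely isometric isomorphisms. Before that I would pin down the notation by checking that the two descriptions of $I_\al$ agree, i.e.\ that $\tilde I := \{X \in \scp A\al : E_n(X) \in I \FORAL n\ge 0\}$ is exactly the closed ideal generated by $I$. That $\tilde I$ is a closed subspace is clear, since each $E_n$ is continuous and $I$ is closed. Using the formulas $E_n(aX) = \al^n(a)E_n(X)$, $E_n(Xa) = E_n(X)a$, $E_n(\Bv X)=E_{n-1}(X)$ and $E_n(X\Bv)=\al(E_{n-1}(X))$ — where the hypothesis $\al(I)\subseteq I$ enters in the last one — one sees that $\tilde I$ is a two-sided ideal containing $I$; conversely Fej\'er's theorem $X=\lim_n\Sigma_n(X)$ writes any $X\in\tilde I$ as a norm limit of elements $\sum_{k}(1-\tfrac kn)\Bv^k E_k(X)$ lying in the ideal generated by $I$. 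Hence $\tilde I = I_\al$.

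For the forward map, let $q\colon A\to A/I$ be the quotient $*$-homomorphism and let $\dot\Bv$ denote the generator of $\scp{(A/I)}{\dot\al}$. I would first check that $(q,\dot\Bv)$ is a covariant pair for $(A,\al)$: since $q\al=\dot\al q$, the covariance relation $q(a)\dot\Bv = \dot\Bv\,\dot\al(q(a)) = \dot\Bv\, q(\al(a))$ holds in $\scp{(A/I)}{\dot\al}$. Representing the target completely isometrically on a Hilbert space, the universal property yields a completely contractive homomorphism $\Phi\colon \scp A\al \to \scp{(A/I)}{\dot\al}$ with $\Phi|_A=q$ and $\Phi(\Bv)=\dot\Bv$. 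As $\Phi|_A=q$ kills $I$ and $\ker\Phi$ is a closed ideal, we get $I_\al\subseteq\ker\Phi$, so $\Phi$ descends to a completely contractive homomorphism $\tilde\Phi\colon \scp A\al/I_\al \to \scp{(A/I)}{\dot\al}$.

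For the inverse, let $\rho\colon \scp A\al \to \scp A\al/I_\al$ be the quotient map onto the quotient operator algebra, and fix a completely isometric representation of this quotient on a Hilbert space $\H$. Then $\rho|_A$ is a completely contractive homomorphism of the C*-algebra $A$, hence a $*$-representation, and it annihilates $I\subseteq I_\al$, so it factors as $\rho|_A=\sigma\circ q$ for a $*$-representation $\sigma$ of $A/I$. Putting $W=\rho(\Bv)$, a contraction, the computation above together with $a\Bv=\Bv\al(a)$ shows $(\sigma,W)$ is a covariant pair for $(A/I,\dot\al)$; the universal property of $\scp{(A/I)}{\dot\al}$ then produces a completely contractive homomorphism $\Psi\colon \scp{(A/I)}{\dot\al}\to \scp A\al/I_\al$ with $\Psi|_{A/I}=\sigma$ and $\Psi(\dot\Bv)=\rho(\Bv)$. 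Evaluating $\tilde\Phi\Psi$ and $\Psi\tilde\Phi$ on the generators $A/I,\dot\Bv$ and $\rho(A),\rho(\Bv)$ respectively shows both composites are the identity, so $\tilde\Phi$ and $\Psi$ are mutually inverse, and therefore completely isometric isomorphisms.

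I expect the main point requiring care to be the construction of $\Psi$: one must use that the quotient $\scp A\al/I_\al$ of an operator algebra by a closed two-sided ideal is again an operator algebra, so that the universal property of the semicrossed product applies with this quotient as target, and one must verify that $\rho|_A$ descends to a genuine $*$-representation $\sigma$ and that $(\sigma,\rho(\Bv))$ satisfies the covariance relation inside the quotient. An alternative to building $\Psi$ by hand is to identify $\ker\Phi$ directly: since $\Phi$ intertwines the gauge actions it intertwines the Fourier maps, giving $\dot\Bv^{\,n} q(E_n(X)) = \dot\Bv^{\,n}\dot E_n(\Phi(X))$, and because $\dot\Bv$ is an isometry in the C*-envelope this forces $E_n(X)\in I$ whenever $\Phi(X)=0$, whence $\ker\Phi=I_\al$ and $\tilde\Phi$ is injective. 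The two-sided inverse, however, makes complete isometry transparent, and is the route I would take.
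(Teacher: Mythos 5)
Your proof is correct and follows essentially the same route as the paper: the canonical completely contractive map $\scp A\al \to \scp{(A/I)}{\dot\al}$ induced by the quotient, descended modulo $I_\al$, and an inverse built from a completely isometric representation of the quotient operator algebra via the universal property, with the two maps checked to be mutually inverse on generators. Your preliminary verification (via the $E_n$ product formulas and Fej\'er's theorem) that $I_\al$ really is the closed ideal generated by $I$ is a detail the paper leaves implicit, and is a welcome addition rather than a deviation.
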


\begin{proof}
Let $q$ be the quotient map of $A$ onto $A/I$; and
let $\dot\Bv$ denote the generator of $\scp {A/I}{\dot\al}$.
Every covariant representation $(\pi,V)$ of $(A/I,\dot\al)$ gives
rise to a covariant representation $(\pi q,V)$ of $(A,\al)$.
Therefore there is a canonical completely contractive map $\dot q = q \times \dot\Bv$
of $\scp A \al$ into $\scp {A/I}{\dot\al}$ such that $\dot q|_A = q$
and $\dot q(\Bv) = \dot\Bv$.

It is easy to verify that $\ker \dot q = I_\al$.
Therefore there is an injective completely contractive map $\si$ of
$\scp A \al/I_\al$ into $\scp {A/R_\al}{\dot\al}$ such that
$\si|_{A/R_\al} = \id$ and $\si(\tilde\Bv) = \dot\Bv$,
where $\tilde\Bv = \Bv + I_\al$.

Conversely, suppose that $\scp A \al/I_\al$ is represented
completely isometrically on a Hilbert space $\H$ by $\hat\pi$.
It is completely determined by the restriction to $A/I$,
which is a $*$-isomorphism $\pi$, and the contraction $V = \hat\pi(\dot\Bv)$.
It is evident that $(\pi,V)$ yields a covariant representation of $(A/I,\dot\al)$.
Therefore by the universal property, $\pi\times V$ is a completely contractive map of
$\scp {A/I}{\dot\al}$ into $\hat\pi(\scp A \al/I_\al)$.
When composed with $\hat\pi^{-1}$, one obtains a completely contractive map $\tau$
of  $\scp {A/I}{\dot\al}$ into $\scp A \al/I_\al$ which restricts to the identity on
$A/R_\al$ and takes $\dot\Bv$ to $\tilde\Bv$.
Clearly $\tau = \si^{-1}$. Therefore $\si$ is a completely isometric isomorphism.
\end{proof}

Now we identify how an isometric isomorphism passes to quotients.

\begin{lem} \label{L:induced}
Suppose that $\phi$ is a completely isometric isomorphism of $\scp A \al$
onto $\scp B \be$.
Let $I$ be an ideal of $A$ and $J =\ga(I)$ an ideal in $B$ such that
$\al(I) \subset I$ and $\be(J) \subset J$.
Let $\dot\al$ denote the induced endomorphism of $A/I$, and
let $I_\al$ be the ideal of $\scp A \al$ generated by $I$.
Similarly, define $\dot\be$ and $J_\be$.
Then $I_\al$ and $I_\be$ are completely isometrically isomorphic
via the restriction of $\phi$,
and $\phi$ induces a completely isometric map $\dot\phi$ of
$\scp {A/I}{\dot\al}$ onto $\scp {B/J}{\dot\be}$.
\end{lem}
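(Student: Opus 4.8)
The plan is to show that $\phi$ carries $I_\al$ onto $J_\be$, and then invoke the quotient identification of Lemma~\ref{L:quotient} to descend $\phi$ to a map between the quotient semicrossed products. First I would verify that $\phi(I_\al) = J_\be$. Recall that $I_\al = \{X : E_n(X) \in I \text{ for all } n\ge0\}$ and similarly for $J_\be$. The key point is that $\phi$ intertwines the gauge actions: since $\phi$ is an isomorphism fixing the grading structure, one checks that $\phi \circ \gamma_z^A = \gamma_z^B \circ \phi$ for all $z \in \bT$, because $\gamma_z$ is characterized by its action on the generators and $\phi(\Bv), \phi^{-1}(\Bw)$ respect this. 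Consequently $\phi$ commutes with the Fourier coefficient maps in the sense that $\phi$ maps the grading of $\scp A \al$ to that of $\scp B \be$. Since $\phi|_A = \ga$ is a $*$-isomorphism with $\ga(I) = J$, it follows that $X \in I_\al$ precisely when each $E_n(X) \in I$, which under $\phi$ corresponds to each Fourier coefficient of $\phi(X)$ lying in $\ga(I) = J$; that is, $\phi(X) \in J_\be$. This gives $\phi(I_\al) \subseteq J_\be$, and applying the same argument to $\phi^{-1}$ (using $\ga^{-1}(J) = I$) yields the reverse inclusion. Thus $\phi|_{I_\al}$ is a completely isometric isomorphism of $I_\al$ onto $J_\be$, being the restriction of a completely isometric map.

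Next I would produce $\dot\phi$. Since $\phi(I_\al) = J_\be$, the map $\phi$ descends to a well-defined bijective linear map
\[
 \dot\phi \colon \scp A \al / I_\al \longrightarrow \scp B \be / J_\be .
\]
It is completely isometric because $\phi$ is completely isometric and the quotient norm on a quotient of an operator algebra by a closed ideal is computed via the quotient operator space structure; restricting a complete isometry to cosets and passing to the quotient preserves complete isometry. Identifying $\scp A \al / I_\al$ with $\scp{A/I}{\dot\al}$ and $\scp B \be / J_\be$ with $\scp{B/J}{\dot\be}$ via Lemma~\ref{L:quotient}, we obtain the desired completely isometric isomorphism $\dot\phi$ of $\scp{A/I}{\dot\al}$ onto $\scp{B/J}{\dot\be}$. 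One should record that $\dot\phi$ restricts to the induced $*$-isomorphism $\dot\ga \colon A/I \to B/J$ (well-defined since $\ga(I)=J$) and sends the generator $\dot\Bv$ to the image of $\phi(\Bv)$ under the quotient, which is consistent with the covariance structure.

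The main obstacle I anticipate is the gauge-equivariance step: establishing cleanly that $\phi$ intertwines the circle actions $\gamma_z$, since this is what guarantees $\phi$ respects the Fourier grading and hence maps $I_\al$ into $J_\be$ rather than merely mapping $A$-coefficients correctly. This is not entirely automatic from $\phi$ being an algebra isomorphism; it uses that $\phi$ is isometric and that the gauge action is intrinsically determined by the operator-algebra structure (for instance via the characterization of the Fourier coefficient maps as the unique such completely contractive expectations, or via uniqueness in the C*-envelope). Once equivariance is in hand, the identity $\phi(I_\al) = J_\be$ and the descent to quotients are routine, relying only on Lemma~\ref{L:quotient} and the standard fact that a complete isometry induces a complete isometry on quotients by corresponding closed ideals.
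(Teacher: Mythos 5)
There is a genuine gap at the heart of your argument: the claimed gauge-equivariance $\phi\circ\gamma_z^{A}=\gamma_z^{B}\circ\phi$ is false in general, and no appeal to an ``intrinsic'' characterization of the gauge action can rescue it. Since $\gamma_z$ multiplies the $n$-th Fourier coefficient by $z^{n}$, equivariance applied to the generator would give $z^{n}E_n(\phi(\Bv))=z\,E_n(\phi(\Bv))$ for all $z\in\bT$ and $n\ge0$; writing $\phi(\Bv)=b_0+\Bw b_1+\Bw^2 Y$ as in Section~\ref{S:alg}, this forces $b_0=0$ and $Y=0$, i.e.\ $\phi(\Bv)=\Bw b_1$. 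That is exactly the special situation of Lemma~\ref{L:a0=0}, and the central difficulty of the paper is that an isometric isomorphism need not have this form. Equivalently, $\phi\,\gamma_z^{A}\,\phi^{-1}$ is a point-norm continuous circle action on $\scp B \be$ fixing $B$ pointwise, yet it differs from $\gamma^{B}$ whenever $b_0\neq0$ or $Y\neq 0$; so the gauge action is not determined by the operator-algebra structure together with the copy of $B$, and your proposed uniqueness argument cannot close the gap. In short, $\phi$ does not respect the Fourier grading, so the equivalence ``each $E_n(X)\in I$ iff each Fourier coefficient of $\phi(X)$ lies in $J$'' is unjustified.

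The step can be repaired without any equivariance, and this is what the paper does. Polynomials $p=\sum_{k=0}^{n}\Bv^{k}r_k$ with $r_k\in I$ are dense in $I_\al$ (apply the F\'ejer maps $\Sigma_n$, whose Fourier coefficients stay in $I$). For such $p$ one has $\phi(p)=\sum_{k=0}^{n}\phi(\Bv)^{k}\ga(r_k)$, and each summand lies in $J_\be$ for the simple reason that $\ga(r_k)\in J\subseteq J_\be$ and $J_\be$ is a closed two-sided ideal of $\scp B \be$ (this is where the hypothesis $\be(J)\subseteq J$ enters, to know that $J_\be$ is an ideal); no information about the Fourier decomposition of $\phi(\Bv)$ is needed. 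Hence $\phi(I_\al)\subseteq J_\be$, and the symmetric argument applied to $\phi^{-1}$ gives equality. From that point on your proposal is correct and coincides with the paper: the restriction of $\phi$ is a completely isometric isomorphism of $I_\al$ onto $J_\be$, the induced map on the quotients is completely isometric, and Lemma~\ref{L:quotient} identifies those quotients with $\scp{A/I}{\dot\al}$ and $\scp{B/J}{\dot\be}$.
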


\begin{proof}
First observe that $\phi(I_\al) = J_\be$.
Indeed, a dense subset of $I_\al$ is given by polynomials
$p = \sum_{k=0}^n \Bv^k r_k$ where $r_k \in I$.
So $\phi(p) = \sum_{k=0}^n \phi(\Bv)^k \ga(r_k)$ lies in $J_\be$.
Similarly, $\phi^{-1}(J_\be) \subset I_\al$.
Therefore we get the desired equality.

Combining this with the previous lemma, we obtain the following diagram:
\[
 \xymatrix{
 \scp A \al \ar[r]^{\phi} \ar[d]^{\tilde q} & \scp B \be \ar[d]^{\tilde q} \\
 \scp A \al/I_\al \ar[r]^{\tilde\phi} \ar[d]^{\simeq} &
 \scp B \be/J_\be  \ar[d]^{\simeq} \\
 \scp {A/I}{\dot\al} \ar[r]^{\dot\phi} & \scp {B/J}{\dot\be}
 }
\]
Since the vertical maps labelled $\tilde q$ are complete quotient maps,
and $\phi(I_\al)=J_\be$, the map $\tilde\phi$ is also completely isometric.
Combining this with the complete isometries of Lemma~\ref{L:quotient}
yields that  $\dot\phi$ is a complete isometry.
\end{proof}

Define $\dot\al \in \End(A/R_\al)$ by $\dot{\al}(a+R_\al)=\al(a)+R_\al$.
It is easy to see that  the dynamical system $(A/R_\al, \dot\al)$ is injective.
Let $\R_\al$ denote the ideal of $\scp A \al$ generated by $R_\al$, i.e.
\[ \R_\al = \{ X \in \scp A \al : E_n(X) \in R_\al \FORAL n \ge 0 \} .\]
Recall that $\phi(\Bv) = b_0 + \Bw b_1 + \Bw^2 Y$.
Let $q_\infty$ be the quotient map $q_\infty:B\to B/R_\be$.
The previous two lemmas combine to yield the first two parts of this proposition.

\begin{prop} \label{P:quotientRal} With the above notation,

\textup{(i)}
$\scp {A/R_\al}{\dot\al}$ is completely isometrically isomorphic to $\scp A \al/\R_\al$.

\textup{(ii)}
The isomorphism $\phi$ induces a completely isometric isomorphism $\dot\phi$ of
$\scp {A/R_\al}{\dot\al}$ onto $\scp {B/R_\be}{\dot\be}$.

\textup{(iii)}
The systems $(A/R_\al,\dot\al)$ and $(B/R_\be,\dot\be)$ are outer conjugate.
In particular, $q_\infty(b_1)$ is invertible in $B/R_\be$.
\end{prop}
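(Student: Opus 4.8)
The plan is to dispatch parts (i) and (ii) as direct applications of the two preceding lemmas, and then to obtain part (iii) by feeding part (ii) into the injective case, Theorem~\ref{T:inj}. For part (i), I would apply Lemma~\ref{L:quotient} with the ideal $I = R_\al$. The only hypothesis to verify is $\al(R_\al) \subseteq R_\al$: if $a \in \ker(\al^k)$ then $\al(a) \in \ker(\al^{k-1}) \subseteq \ker(\al^k)$, so $\al$ carries each $\ker(\al^k)$, and hence their closed union $R_\al$, into $R_\al$. Lemma~\ref{L:quotient} then identifies $\scp{A/R_\al}{\dot\al}$ completely isometrically with $\scp{A}{\al}/\R_\al$.

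For part (ii), I would invoke Lemma~\ref{L:induced} with $I = R_\al$ and $J = R_\be$. By Corollary~\ref{C:kernels} we have $\ga(R_\al) = R_\be$, so $J = \ga(I)$ as required, and by the symmetric version of the argument above $\be(R_\be) \subseteq R_\be$. Lemma~\ref{L:induced} then supplies the completely isometric isomorphism $\dot\phi$ of $\scp{A/R_\al}{\dot\al}$ onto $\scp{B/R_\be}{\dot\be}$, restricting to the induced $*$-isomorphism $\dot\ga$ on $A/R_\al$.

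For part (iii), the key observation, noted just before the statement, is that the quotient system $(A/R_\al, \dot\al)$ is injective. Since part (ii) shows its semicrossed product is isometrically isomorphic to that of $(B/R_\be, \dot\be)$ via $\dot\phi$, Theorem~\ref{T:inj} applies verbatim and yields that $(A/R_\al, \dot\al)$ and $(B/R_\be, \dot\be)$ are outer conjugate.

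For the final assertion, I would track the image of the generator through the identifications. The commuting diagram of Lemma~\ref{L:induced} gives $\dot\phi(\dot\Bv) = \tilde q(\phi(\Bv))$, where $\tilde q$ is the composite of the quotient by $\R_\be$ with the part~(i) identification for $B$; under this map $\Bw \mapsto \dot\Bw$ and $b \mapsto q_\infty(b)$. Hence
\[ \dot\phi(\dot\Bv) = q_\infty(b_0) + \dot\Bw\, q_\infty(b_1) + \dot\Bw^2\, \tilde q(Y), \]
whose first Fourier coefficient is $q_\infty(b_1)$. The proof of Theorem~\ref{T:inj} shows precisely that, in the injective case, the first Fourier coefficient of the image of the generator is invertible; applied here this forces $q_\infty(b_1)$ to be invertible in $B/R_\be$. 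I expect the main obstacle to be not conceptual but a matter of bookkeeping: one must confirm that the Fourier coefficient $E_1$ is genuinely preserved under the chain of completely isometric identifications relating $\scp{B}{\be}/\R_\be$ and $\scp{B/R_\be}{\dot\be}$, so that the element delivered as invertible by Theorem~\ref{T:inj} is literally $q_\infty(b_1)$ rather than merely an abstractly isomorphic image of it.
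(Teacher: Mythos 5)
Your proposal is correct and follows essentially the same route as the paper: parts (i) and (ii) are exactly Lemma~\ref{L:quotient} and Corollary~\ref{C:kernels} combined with Lemma~\ref{L:induced}, and part (iii) applies Theorem~\ref{T:inj} to the injective quotient systems, reading off the invertibility of $q_\infty(b_1)$ as the coefficient of $\dot\Bw$ in $\dot\phi(\dot\Bv)$ from the proof of that theorem. The only difference is cosmetic: the paper verifies the injectivity of $(A/R_\al,\dot\al)$ inside the proof (by showing $\al^{-1}(R_\al)\subset R_\al$ using $a\in R_\al \iff \lim_n\al^n(a)=0$), whereas you cite the remark preceding the statement, and your ``bookkeeping'' concern is resolved exactly as you suggest, since the quotient identifications carry $\Bw^k b$ to $\dot\Bw^k q_\infty(b)$ and hence preserve Fourier coefficients.
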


\begin{proof}
The first statement is immediate from Lemma~\ref{L:quotient}.
The second statement follows from Corollary~\ref{C:kernels} and Lemma~\ref{L:induced}.

Observe that $\al^{-1}(R_\al) \subset R_\al$ because $x \in \al^{-1}(R_\al)$
implies that $\al(x) \in R_\al$. Hence $\lim_n \al^{n+1}(x) = 0$,
showing that $x$ is in $R_\al$.
Therefore the dynamical system $(A/R_\al,\dot\al)$ is injective.
It now follows from Theorem~\ref{T:inj} that
$(A/R_\al,\dot\al)$ and $(B/R_\be,\dot\be)$ are outer conjugate.
Moreover the proof shows that the coefficient of $\dot\Bw$ in $\dot\phi(\dot\Bv)$,
namely $q_\infty(b_1)$, is invertible.
\end{proof}

Now using the fact that $R_\be$ is the union of an increasing sequence of ideals,
we improve the previous result to a quotient by the kernel of a finite power of $\al$.
Let $q_n$ be the quotient map $q_n:B\to B/\ker(\be^n)$ for $n \ge 1$.

\begin{cor} \label{C:quotient al^n0}
Suppose that $\phi$ is a completely isometric isomorphism of $\scp A \al$
onto $\scp B \be$.
Then there is an integer $n_0$ so that $q_{n_0}(b_i)$ is invertible in
$B/\ker\be^{n_0}$. Therefore
$(A/\ker\al^{n_0},\tilde\al)$ and $(B/\ker\be^{n_0},\tilde\be)$ are outer conjugate.
\end{cor}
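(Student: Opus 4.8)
The plan is to push the invertibility of $q_\infty(b_1)$ in $B/R_\be$, obtained in Proposition~\ref{P:quotientRal}(iii), down to a quotient by the kernel of a \emph{finite} power of $\be$, and then to invoke the Key Lemma~\ref{L:key} on the resulting pair of quotient systems. The essential input is that $R_\be = \ol{\bigcup_{n\ge1}\ker(\be^n)}$ is the closure of an \emph{increasing} union of ideals, so that invertibility modulo $R_\be$ should already be visible modulo $\ker(\be^{n})$ once $n$ is large enough.

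First I would lift the inverse of $q_\infty(b_1)$ to an element $c \in B$, so that both $b_1 c - 1$ and $c b_1 - 1$ lie in $R_\be$. Since $\bigcup_{n\ge1}\ker(\be^n)$ is dense in $R_\be$ and the kernels increase with $n$, I can then choose a single index $n_0$ and elements $y, y' \in \ker(\be^{n_0})$ with $\|b_1 c - 1 - y\| < 1$ and $\|c b_1 - 1 - y'\| < 1$. As $q_{n_0}$ is a contractive homomorphism annihilating $\ker(\be^{n_0})$, this gives
\[
 \| q_{n_0}(b_1)\,q_{n_0}(c) - 1 \| < 1 \qand
 \| q_{n_0}(c)\,q_{n_0}(b_1) - 1 \| < 1 ,
\]
so both products are invertible in $B/\ker(\be^{n_0})$. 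Hence $q_{n_0}(b_1)$ is simultaneously left and right invertible, and therefore invertible.

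It then remains to feed this into the machinery of the preceding lemmas. Setting $I = \ker(\al^{n_0})$ and $J = \ker(\be^{n_0})$, one has $\al(I)\subset I$, $\be(J) \subset J$, and $\ga(I) = J$ by Corollary~\ref{C:kernels}; so Lemma~\ref{L:induced} yields a completely isometric isomorphism $\dot\phi$ of $\scp{A/\ker(\al^{n_0})}{\tilde\al}$ onto $\scp{B/\ker(\be^{n_0})}{\tilde\be}$. Tracking Fourier coefficients through the quotient map, the $E_1$-coefficient of the image of the generator $\dot\Bv$ under $\dot\phi$ equals $q_{n_0}(b_1)$, which we have just shown to be invertible. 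The Key Lemma~\ref{L:key}, applied to the two quotient systems, then gives that $(A/\ker\al^{n_0}, \tilde\al)$ and $(B/\ker\be^{n_0}, \tilde\be)$ are outer conjugate.

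The only delicate point is the transfer step: one must secure a \emph{single} level $n_0$ witnessing both the left and the right invertibility of $q_{n_0}(b_1)$. This is exactly where the increasing nature of the family $\{\ker(\be^n)\}$ is used, since it allows us to replace two separate indices by their maximum and compose with the quotient maps $B/\ker(\be^{n_1}) \to B/\ker(\be^{n_0})$ for $n_1 \le n_0$. The remaining estimates are routine consequences of the contractivity of the quotient maps.
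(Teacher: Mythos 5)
Your proposal is correct and follows essentially the same route as the paper: descend the invertibility of $q_\infty(b_1)$ from Proposition~\ref{P:quotientRal}(iii) to a finite level $n_0$ using the fact that $R_\be$ is the closed increasing union of the $\ker(\be^n)$, then conclude via Lemma~\ref{L:induced} and the Key Lemma~\ref{L:key}. The only difference is that you spell out the standard lifting-and-estimate argument (choosing $c$, $y$, $y'$ and using contractivity of $q_{n_0}$) that the paper leaves implicit in the phrase ``it follows that there is an integer $n_0$.''
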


\begin{proof}
Since $q_\infty(b_1)$ is invertible and $R_\al$ is the closed union
of $\ker(\al^n)$ for $n \ge 1$, it follows that there is an integer $n_0$ so that
$b_1 + \ker(\be^{n_0})$ is invertible in $B/\ker\be^{n_0}$.
Therefore $(A/\ker\al^{n_0},\tilde\al)$ and $(B/\ker\be^{n_0},\tilde\be)$ are
outer conjugate by Lemma~\ref{L:induced} and the Key Lemma~\ref{L:key}.
\end{proof}

For $k \ge 0$, let
$A_k = A/\ker(\al^k)$ and $J_k = \ker(\al^{k+1})/\ker(\al^k)$.
Choose a faithful non-degenerate representation $\pi_k$ of $J_k$.
This induces a representation of $A_k$ on the same space, which we also call $\pi_k$.
Also let $\sigma_n$ denote a faithful non-degenerate representation of $A_n$.
Then
\[ \rho = \sum_{k=0}^{n_0-1} \strut^\oplus \pi_k q_k \oplus \sigma_{n_0}q_{n_0} \]
yields a representation of $A$.
It follows from basic C*-algebra theory that $\rho$ is faithful.

\begin{lem} \label{L:al(Ral)}
Consider the following properties:
\begin{enumerate}
\renewcommand{\labelenumi}{(\arabic{enumi}) }
\item $\al(R_\al) = R_\al$.
\item $\al(\ker(\al^{k+1})) = \ker(\al^k)$ for all $k \ge 0$.
\item $\al(\ann(R_\al)) \subseteq \ann(R_\al)$.
\item $\al^k(a_0) \in \ann(R_\al)$ for all $k \ge 0$.
\item $\al(a_0) \in \ann(R_\al)$.
\item $\al^k(a_0) \in \ann(\ker(\al))$ for all $k \ge 0$.
\item $\al(a_0) \ker(\al^{k+1}) \subset \ker(\al^k)$ for all $k \ge 0$.
\item $\pi_k q_k(a_0) = \pi_k q_k \al (a_0) = 0$ for all $k \ge 0$.
\end{enumerate}
Then
\[
 (1) \iff (2) \implies (3)  \implies (4)
 \begin{matrix}
 \begin{turn}{25}$\implies$\end{turn} \\[-1ex]
 \begin{turn}{-25}$\implies$\end{turn}
 \end{matrix}
 \begin{matrix} (5) \\[2.6ex] (6)\end{matrix}
 \begin{matrix}
 \begin{turn}{-25}$\implies$\end{turn} \\[2ex]
 \begin{turn}{25}$\implies$\end{turn}
 \end{matrix}
 (7) \iff (8) .
\]
\end{lem}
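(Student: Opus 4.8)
The plan is to verify each arrow of the displayed diagram in turn, leaning throughout on three facts that are available before any of the eight conditions is assumed: that $\al(R_\al)\subseteq R_\al$ and $\ker(\al^j)\subseteq R_\al$ for every $j$ (so in particular $\ann(R_\al)\subseteq\ann(\ker\al)$), and that $a_0\in\ann(R_\al)$ by Lemma~\ref{L:annihilator}. With these in hand, most of the arrows collapse to one-line computations. I would dispatch them in the order $(1)\!\iff\!(2)$, then $(2)\Rightarrow(3)\Rightarrow(4)\Rightarrow(5),(6)$, then $(5)\Rightarrow(7)$ and $(6)\Rightarrow(7)$, and finally the equivalence $(7)\!\iff\!(8)$.

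For $(1)\!\iff\!(2)$ I would first record the inclusion $\al(\ker\al^{k+1})\subseteq\ker\al^k$, which always holds. For $(1)\Rightarrow(2)$, given $x\in\ker\al^k\subseteq R_\al=\al(R_\al)$ I would write $x=\al(y)$ with $y\in R_\al$; then $\al^{k+1}(y)=\al^k(x)=0$ forces $y\in\ker\al^{k+1}$, yielding the reverse inclusion. For $(2)\Rightarrow(1)$, applying $(2)$ term by term gives $\al\big(\bigcup_k\ker\al^k\big)=\bigcup_k\ker\al^k$, a set dense in $R_\al$; since $\al(R_\al)$ is the image of a C*-algebra under a $*$-homomorphism it is closed, and combined with $\al(R_\al)\subseteq R_\al$ this forces $\al(R_\al)=R_\al$.

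The interior arrows are short. For $(2)\Rightarrow(3)$ I would use $R_\al=\al(R_\al)$: if $a\in\ann(R_\al)$ and $r=\al(s)\in R_\al$, then $\al(a)r=\al(as)=0$. Then $(3)\Rightarrow(4)$ is an induction anchored at $a_0\in\ann(R_\al)$; $(4)\Rightarrow(5)$ is the case $k=1$; and $(4)\Rightarrow(6)$ is immediate from $\ann(R_\al)\subseteq\ann(\ker\al)$. For $(5)\Rightarrow(7)$, any $x\in\ker\al^{k+1}\subseteq R_\al$ is annihilated outright by $\al(a_0)\in\ann(R_\al)$, so $\al(a_0)x=0\in\ker\al^k$; for $(6)\Rightarrow(7)$ I would instead compute $\al^k(\al(a_0)x)=\al^{k+1}(a_0)\,\al^k(x)$ and note that $\al^k(x)\in\ker\al$ while $\al^{k+1}(a_0)\in\ann(\ker\al)$, so the product vanishes.

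The delicate step, and the one I expect to be the main obstacle, is $(7)\!\iff\!(8)$, since it requires translating the representation-theoretic condition back into the quotients $A_k=A/\ker(\al^k)$. The key observation is that $\pi_k$, being the extension to $A_k$ of a faithful non-degenerate representation of the ideal $J_k=\ker(\al^{k+1})/\ker(\al^k)$, has kernel exactly $\ann_{A_k}(J_k)$; hence for $a\in A$ one has $\pi_k q_k(a)=0$ iff $q_k(a)J_k=0$ iff $a\,\ker(\al^{k+1})\subseteq\ker(\al^k)$. Applied to $\al(a_0)$ this reproduces $(7)$ verbatim, while applied to $a_0$ it gives a condition that is automatic: since $a_0\in\ann(R_\al)$ and $\ker(\al^{k+1})\subseteq R_\al$, we get $a_0\,\ker(\al^{k+1})=0$, so $\pi_k q_k(a_0)=0$ for every $k$. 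Thus the $a_0$-half of $(8)$ is free and $(8)$ collapses to $(7)$. The care required here lies in justifying the kernel computation for the induced representation and in recognizing that this automatic vanishing is precisely what upgrades the relationship from an implication to the asserted equivalence.
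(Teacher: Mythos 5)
Your proposal is correct and follows essentially the same route as the paper's proof: the same chain of elementary arguments for the arrows among (1)--(7) (induction from $a_0\in\ann(R_\al)$, the inclusion $\ann(R_\al)\subseteq\ann(\ker\al)$, and the computation $\al^k(\al(a_0)x)=\al^{k+1}(a_0)\al^k(x)$), and the same use of non-degeneracy and faithfulness of $\pi_k$ on $J_k$ for $(7)\iff(8)$, which you merely repackage as the identity $\ker\pi_k=\ann_{A_k}(J_k)$ together with the observation that the $a_0$-half of (8) is automatic. The only cosmetic differences are that you prove $(1)\Rightarrow(2)$ directly rather than by contrapositive, and derive (3) from (2) where the paper derives it from (1).
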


\begin{proof}
Clearly $\al(\ker(\al^{k+1}))$ is contained in $\ker(\al^k)$.
If they are not equal, take any $x \in \ker(\al^k)$ which is
not in $\al(\ker(\al^{k+1}))$. Then $x$ is not in the range of $\al$.
So $\al(R_\al)$ does not contain $x$. Therefore (1) implies (2).
Conversely, it is clear that if (2) holds, then $\al(R_\al)$ contains
$\ker(\al^k)$ for all $k \ge 0$. Since the image is a C*-algebra, it is closed
and thus is all of $R_\al$. So (2) implies (1).

Clearly (1) implies (3) because then
\[ \al(\ann(R_\al)) \subseteq \ann(\al(R_\al)) = \ann(R_\al) . \]

By Lemma~\ref{L:annihilator}, $a_0 \in \ann(R_\al)$.
Thus (3) implies that $\al^k(a_0)$ lies in $\ann(\al^k(R_\al))$ for all $k \ge 0$. So (4) follows.
Clearly (4) implies (5) and (6); and (5) trivially implies (7).

Assume (6), and suppose that $x \in \ker(\al^{k+1})$.
Then $\al^k(x) \in \ker(\al)$. So
\[ \al^k( \al(a_0)x) ) = \al^{k+1}(a_0) \al^k(x) = 0 .\]
Therefore $\al(a_0)x \in \ker(\al^k)$; whence  (6) implies (7).

Since $a_0 \in \ann(R_\al)$, we always have $\pi_k q_k(a_0) = 0$.
Assume that (7) holds.
The representation $\pi_k$ is non-degenerate on $J_k$.
Thus $\pi_k(J_k) = \pi_k q_k (\ker(\al^{k+1}) )$ has dense range in $\H_{\pi_k}$.
By (7), $\pi_k q_k \al (a_0 x) = 0$ for all $x \in \ker(\al^{k+1})$.
So $\pi_k q_k \al (a_0) = 0$. Hence (7) implies (8).

Conversely, if (8) holds, then $\pi_k q_k (\al (a_0) x) = 0$ for all $x \in \ker(\al^{k+1})$.
Since $\pi_k$ is faithful on $J_k$, it follows that $q_k (\al (a_0) x) = 0$; namely,
$\al(a_0) x$ lies in $\ker(\al^k)$. So (7) follows.
\end{proof}

\begin{lem} \label{L:b1 inv mod k}
Assume that $\al(a_0) \ker(\al^{k+1}) \subset \ker(\al^k)$ for all $k \ge 0$.
Then $\pi_k q_k (a_1)$ is invertible for all $k \ge 0$.
\end{lem}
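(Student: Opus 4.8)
The plan is to apply the unital $*$-representation $\rho_k := \pi_k q_k$ of $A$ to the fundamental coefficient identity extracted in the proof of Lemma~\ref{L:a0=0} from the computation $\Bv = \phi^{-1}(\phi(\Bv))$, namely
\[ 1 = a_1 \ga^{-1}(b_1) + \al(a_0) a_1 c_0 + a_1 a_0 c_0 + \al(a_0)^2 c_1 , \]
where $\phi^{-1}(Y) = c_0 + \Bv c_1 + \Bv^2 Z$. Every factor here lies in $A$, so this is legitimate. The hypothesis is exactly condition~(7) of Lemma~\ref{L:al(Ral)}, which by the equivalence $(7)\iff(8)$ established there is the same as $\rho_k(a_0) = 0$ and $\rho_k(\al(a_0)) = 0$. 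Each of the last three summands contains a factor $a_0$ or $\al(a_0)$, so all three are annihilated by $\rho_k$, leaving
\[ I = \rho_k(a_1)\,\rho_k(\ga^{-1}(b_1)) \]
on $\H_{\pi_k}$. Thus $\rho_k(a_1)$ is a left inverse of $\rho_k(\ga^{-1}(b_1))$.

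The second step upgrades this to genuine invertibility by inverting the companion factor rather than $\rho_k(a_1)$ itself. By Lemma~\ref{L:rightinv} the element $b_1$ is right invertible in $B$, hence $\ga^{-1}(b_1)$ is right invertible in $A$, and so $\rho_k(\ga^{-1}(b_1))$ is right invertible in $\B(\H_{\pi_k})$. An element of a unital algebra possessing both a left and a right inverse is invertible, the two one-sided inverses coinciding. Applying this to $\rho_k(\ga^{-1}(b_1))$ — whose left inverse is $\rho_k(a_1)$ by the displayed identity — shows that $\rho_k(\ga^{-1}(b_1))$ is invertible and that $\rho_k(a_1)$ is its two-sided inverse. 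Hence $\pi_k q_k(a_1) = \rho_k(a_1)$ is invertible, as required.

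The step I expect to need the most care is precisely this passage from one-sided to two-sided invertibility: a priori $\rho_k(a_1)$ is only known to be right invertible, being the image of the right-invertible $a_1$, and right invertibility alone would not suffice. The resolution is that the displayed identity hands us a left inverse for the \emph{other} factor $\rho_k(\ga^{-1}(b_1))$, whose right invertibility is transparent from $B$; inverting that factor then forces $\rho_k(a_1)$ to be its inverse. A subsidiary point to verify is that $\rho_k$ is unital, so that $\rho_k(1) = I$ above: since $q_k$ is a unital quotient map and $\pi_k$ is non-degenerate on the ideal $J_k = \ker(\al^{k+1})/\ker(\al^k)$ of $A_k$, the unit $1_{A_k}$ acts as the identity on $\H_{\pi_k}$. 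When $J_k = 0$ the representation is on the zero space and the assertion is vacuous.
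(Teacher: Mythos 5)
Your proof is correct and takes essentially the same approach as the paper: apply $\pi_k q_k$ to the coefficient identity from Lemma~\ref{L:a0=0}, use the equivalence $(7)\iff(8)$ of Lemma~\ref{L:al(Ral)} to annihilate the terms containing $a_0$ and $\al(a_0)$, and then combine the resulting left inverse with the right invertibility supplied by Lemma~\ref{L:rightinv} to conclude two-sided invertibility. The only (immaterial) difference is that the paper additionally observes, as in Lemma~\ref{L:a0=0}, that $\pi_k q_k(a_1)$ and $\pi_k q_k\ga^{-1}(b_1)$ are in fact unitary, which the statement does not require.
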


\begin{proof}
By Lemma~\ref{L:al(Ral)}, we have that $\pi_k q_k(a_0) = \pi_k q_k \al (a_0) = 0$
for all $k \ge 0$. Lemma~\ref{L:a0=0} shows that
\[ 1 = a_1 \ga^{-1}(b_1) + \al(a_0)a_1c_0 + a_1a_0c_0 + \al(a_0)^2c_1 .\]
Apply $ \pi_k q_k$ to obtain
\[ \pi_k q_k (1) = \pi_k q_k (a_1 \ga^{-1}(b_1)) .\]
Thus $\pi_k q_k \ga^{-1}(b_1)$ is left invertible, and so invertible by
Lemma~\ref{L:rightinv}. Now as in Lemma~\ref{L:a0=0}, it follows that
$\pi_k q_k \ga^{-1}(b_1)$ and $\pi_k q_k(a_1)$ are unitary.
\end{proof}

We obtain the following partial result in the non-injective case.

\begin{prop}\label{P:b0orbit}
Suppose that $\phi$ is an isometric isomorphism of the semicrossed product
$A\times_\al \bZ_+$  onto $B\times_\be \bZ_+$.
If any of the conditions of Lemma~$\ref{L:al(Ral)}$ holds,
particularly
\begin{itemize}
\item[(1)] $\al(R_\al) = R_\al$\quad or
\quad$(3)$\ $\al(\ann(R_\al)) \subseteq \ann(R_\al)$,
\end{itemize}
then $(A,\al)$ and $(B,\be)$ are outer conjugate.
\end{prop}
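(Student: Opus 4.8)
The plan is to show that the degree-one coefficient $a_1 = E_1\phi^{-1}(\Bw)$ is invertible in $A$, and then to invoke the Key Lemma~\ref{L:key}, applied to the inverse isomorphism $\phi^{-1}\colon \scp B\be \to \scp A\al$ (whose degree-one coefficient is exactly $a_1$). The first observation is that each of the eight conditions listed in Lemma~\ref{L:al(Ral)} implies condition~(7), and hence the equivalent condition~(8). This is precisely the hypothesis of Lemma~\ref{L:b1 inv mod k}, so that lemma gives that $\pi_k q_k(a_1)$ is invertible for every $k \ge 0$.

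Next I would control $a_1$ at the top of the filtration. Applying Corollary~\ref{C:quotient al^n0} to $\phi^{-1}$ produces an integer $n_0$ for which $q_{n_0}(a_1)$ is invertible in $A/\ker(\al^{n_0})$. Since $\sigma_{n_0}$ is a faithful representation of $A_{n_0} = A/\ker(\al^{n_0})$, the operator $\sigma_{n_0} q_{n_0}(a_1)$ is then invertible as well.

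With this $n_0$ fixed, I assemble these local facts into the faithful representation $\rho = \sum_{k=0}^{n_0-1}\strut^\oplus \pi_k q_k \oplus \sigma_{n_0} q_{n_0}$ of $A$ constructed above. One has
\[
\rho(a_1) = \sum_{k=0}^{n_0-1}\strut^\oplus \pi_k q_k(a_1) \oplus \sigma_{n_0} q_{n_0}(a_1),
\]
a finite direct sum in which the first $n_0$ summands are invertible by Lemma~\ref{L:b1 inv mod k} and the last summand is invertible by the previous paragraph. Hence $\rho(a_1)$ is invertible, and since $\rho$ is a faithful unital $*$-representation, $a_1$ is invertible in $A$. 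The Key Lemma~\ref{L:key}, applied to $\phi^{-1}$, then yields that $(A,\al)$ and $(B,\be)$ are outer conjugate.

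The crux --- and the step I expect to be the main obstacle --- is the passage from the \emph{layer-by-layer} invertibility produced by Lemma~\ref{L:b1 inv mod k} (invertibility of $\pi_k q_k(a_1)$ modulo each kernel $\ker(\al^k)$ separately) to genuine invertibility of $a_1$ in $A$. Because $R_\al$ is the closed union of the increasing chain $\ker(\al^k)$, there are a priori infinitely many layers to control. What makes the argument finite is Corollary~\ref{C:quotient al^n0}: invertibility of the coefficient \emph{stabilises} modulo $\ker(\al^{n_0})$ at a finite stage $n_0$. This is exactly what permits the finitely many bottom layers $\pi_k q_k$ together with the single stabilised top layer $\sigma_{n_0} q_{n_0}$ to be combined into one faithful representation $\rho$, thereby converting the collection of local invertibility statements into a single global one.
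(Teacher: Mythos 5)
Your proposal is correct and takes essentially the same route as the paper's own proof: layer-by-layer invertibility of $\pi_k q_k(a_1)$ via Lemmas~\ref{L:al(Ral)} and~\ref{L:b1 inv mod k}, stabilisation at a finite stage $n_0$ via Corollary~\ref{C:quotient al^n0}, assembly of these pieces in the faithful representation $\rho$ to conclude that $a_1$ is invertible, and then the Key Lemma~\ref{L:key}. The only (welcome) difference is that you spell out the symmetric applications to $\phi^{-1}$, which the paper leaves implicit.
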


\begin{proof}
By the previous two lemmas, we obtain that $\pi_k q_k (a_1)$ is invertible for all $k \ge 0$.
By Corollary~\ref{C:quotient al^n0}, we also see that there is an $n_0$ so that
$\si_{n_0} q_{n_0}(a_1)$ is invertible. Therefore $\rho(a_1)$ is invertible.
This is a faithful representation, so $a_1$ is invertible.
The result follows from the Key Lemma \ref{L:key}.
\end{proof}

As one last result, we make an improvement to Proposition~\ref{P:finite_commutant}.
Let $\rho$ be a faithful representation of $R_\infty$ on $\H$.
This induces a representation of $A$, which we also call $\rho$, 
which factors through 
\[ A \to A/\ann(R_\infty) \to M(R_\infty) ,\]
where $M(R_\infty)$ is the multiplier algebra of $R_\infty$.
Since $\rho$ extends to a faithful representation of $M(R_\infty)$,
we may consider $\rho(A)$ as a subalgebra of $M(R_\infty)$.

\begin{prop} \label{P:rho_finite_commutant}
Suppose that $\phi$ is an isometric isomorphism of the semicrossed product
$A\times_\al \bZ_+$  onto $B\times_\be \bZ_+$.
If $\rho(\al(A)')$, the image of the commutant of $\al(A)$
in $M(R_\infty)$, is a finite C*-algebra,
then $(A,\al)$ and $(B,\be)$ are outer conjugate. 
\end{prop}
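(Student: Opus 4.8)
The plan is to prove that the element $t := a_1\,\ga^{-1}(b_1)$ is invertible in $A$. Once that is in hand, the factorization $t=a_1\,\ga^{-1}(b_1)$ shows $\ga^{-1}(b_1)$ is left invertible, and since it is also right invertible by Lemma~\ref{L:rightinv}, it is invertible; hence $b_1$ is invertible and outer conjugacy follows from the Key Lemma~\ref{L:key}. By Lemma~\ref{L:identities}(iii) we have $t\in\al(A)'$, and $t$ is right invertible by Lemma~\ref{L:rightinv}. The obstacle, compared with Proposition~\ref{P:finite_commutant}, is that $\al(A)'$ need not be finite: only its image $\rho(\al(A)')$ inside $M(R_\infty)$ is assumed finite, so right invertibility cannot be promoted to invertibility inside $\al(A)'$ directly.

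First I would build a faithful representation of $A$ that isolates the behaviour of $t$ on the radical from its behaviour on the quotient. Write $R_\infty=R_\al$, let $q\colon A\to A/R_\al$ be the quotient map, let $\si$ be a faithful representation of $A/R_\al$, and set $\Pi=\rho\oplus\si q$ acting on $\H\oplus\H_0$. Since $\ker\rho=\ann(R_\al)$, $\ker(\si q)=R_\al$, and $\ann(R_\al)\cap R_\al=\{0\}$, the representation $\Pi$ is faithful. As $\Pi(t)$ is block diagonal and $\Pi$ is a $*$-isomorphism onto $\Pi(A)$, spectral permanence gives that $t$ is invertible in $A$ if and only if both $\rho(t)$ and $\si q(t)$ are invertible.

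For the quotient block I would check that $q(t)$ is invertible in $A/R_\al$. Proposition~\ref{P:quotientRal}(iii) gives that $q_\infty(b_1)$ is invertible in $B/R_\be$; transporting this through the induced isomorphism $\dot\ga\colon A/R_\al\to B/R_\be$ shows $q(\ga^{-1}(b_1))$ is invertible in $A/R_\al$, and the symmetric form of Proposition~\ref{P:quotientRal}(iii), obtained by interchanging the two systems through $\phi^{-1}$, shows $q(a_1)$ is invertible in $A/R_\al$. Hence $q(t)=q(a_1)\,q(\ga^{-1}(b_1))$ is invertible, and so is $\si q(t)$.

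The real work is the radical block. Here $\rho$ is unital ($\rho(1)=I$) and $1\in\al(A)'$, so $\rho(\al(A)')$ is a unital C*-subalgebra of $B(\H)$, and it is finite by hypothesis. Right invertibility of $t$ makes $\rho(t)$ right invertible, so $\rho(t)\rho(t)^*\ge\eps 1>0$ is invertible; since $\rho(t),\rho(t)^*\in\rho(\al(A)')$, spectral permanence places $(\rho(t)\rho(t)^*)^{-1/2}$ in $\rho(\al(A)')$, and so $c:=(\rho(t)\rho(t)^*)^{-1/2}\rho(t)$ is a coisometry lying in $\rho(\al(A)')$. Then $c^*$ is an isometry in the finite algebra $\rho(\al(A)')$, hence unitary, so $c$ is unitary and $\rho(t)=(\rho(t)\rho(t)^*)^{1/2}c$ is invertible. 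Combining the two blocks yields $\Pi(t)$ invertible, hence $t$ invertible, and the Key Lemma~\ref{L:key} finishes the argument. The step I expect to be most delicate is precisely this last block: one must be careful that the coisometry $c$ genuinely lies in $\rho(\al(A)')$, so that finiteness can be applied there, and that spectral permanence legitimately transfers invertibility between $\rho(\al(A)')$, $B(\H)$, and $A$, rather than any isolated computation.
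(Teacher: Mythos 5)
Your proof is correct and follows essentially the same route as the paper: the same faithful representation $\rho \oplus (\text{faithful rep of } A/R_\al)\circ q$, invertibility on the quotient block via Proposition~\ref{P:quotientRal}, and invertibility on the radical block by combining Lemma~\ref{L:identities}(iii), Lemma~\ref{L:rightinv}, and finiteness of $\rho(\al(A)')$, before invoking the Key Lemma~\ref{L:key}. The only (immaterial) difference is that you track invertibility of the product $a_1\ga^{-1}(b_1)$ in $A$ and conclude that $b_1$ is invertible, whereas the paper splits off $\rho(\ga^{-1}(b_1))$ to conclude that $a_1$ is invertible; your write-up also usefully makes explicit the spectral-permanence and coisometry details that the paper leaves implicit.
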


\begin{proof}
The argument again is to show that $a_1$ is invertible.
A faithful representation of $A$ is obtained from $\pi q_\infty \oplus \rho$,
where $\pi$ is a faithful representation of $A/R_\infty$.
By Proposition~\ref{P:quotientRal}, $\pi q_\infty(a_1)$ is invertible.
Now $\rho(a_1\ga^{-1}(b_1))$ belongs to $\rho(\al(A)')$ by Lemma~\ref{L:identities}(iii).
This element is right invertible by Lemma~\ref{L:rightinv}.
By hypothesis, $\rho(\al(A)')$ is finite. Therefore $\rho(a_1\ga^{-1}(b_1))$ is invertible.
Hence $\rho(\ga^{-1}(b_1))$ is invertible, and so $\rho(a_1)$ is also invertible.
Combining these two facts, we obtain that $a_1$ is invertible.
Thus the Key Lemma~\ref{L:key} applies to show that 
$(A,\al)$ and $(B,\be)$ are outer conjugate. 
\end{proof}


\end{document}